\newtheorem{theorem}{Theorem}[section]
\newtheorem{lemma}[theorem]{Lemma}
\theoremstyle{definition}
\newtheorem{example}[theorem]{Example}
\newtheorem{cor}[theorem]{Corollary}
\newtheorem{question}[theorem]{Question}
\theoremstyle{remark}
\newtheorem{remark}[theorem]{Remark}
\numberwithin{equation}{section}
\newcommand{\abs}[1]{\lvert#1\rvert}
\newcommand{\dist}{\mathrm{dist}}
\newcommand{\diam}{\operatorname{diam}}
\newcommand*\bigcdot{\mathpalette\bigcdot@{.5}}
\newcommand*\bigcdot@[2]{\mathbin{\vcenter{\hbox{\scalebox{#2}{$\m@th#1\bullet$}}}}}
\newcommand{\R}{\mathbb{R}}
\newcommand{\N}{\mathbb{N}}
\newcommand{\Z}{\mathbb{Z}}
\def\red{\textcolor{red}}
\begin{document}
\title{Short closed geodesics and the Willmore energy}
% \title[Around one and a half worlds in zero days]{Around one and a half worlds in zero days\\ --- On the curvature cost of short world trips}
%\title{A lower bound for the shortest closed geodesic in terms of the Willmore energy}

% Alternative titles
%  'Around three and a half worlds in zero days' (ohne Formel)
%  'Around three hemispheres in zero days'
%  'The minimal curvature cost of short round-the-world trips'
%  'Infinitesimally short circumnavigations cost an additional hemisphere'
%  'On the cost of short world trips'

\author[M.~Müller]{Marius Müller}
\address[M.~Müller]{Institute of Mathematics, University of Augsburg,  Universitätsstraße 14, 86159 Augsburg.}
%%    Current address
%\curraddr{Department of Mathematics and Statistics,
%Case Western Reserve University, Cleveland, Ohio 43403}
\email{marius1.mueller@uni-a.de}

% Info  Fabian 
\author[F.~Rupp]{Fabian Rupp}
\address[F.~Rupp]{Faculty of Mathematics, University of Vienna, Oskar-Morgenstern-Platz 1, 1090 Vienna, Austria.}
\email{fabian.rupp@univie.ac.at}
%\thanks{The first author was supported in part by NSF Grant \#000000.}
% Info Christian
\author[C.~Scharrer]{Christian Scharrer}
\address[C.~Scharrer]{Institute for Applied Mathematics, University of Bonn, Endenicher Allee 60, 53115 Bonn, Germany.}
\email{scharrer@iam.uni-bonn.de}

%    General info
\subjclass{Primary: 53C22
Secondary: 53A05, 28A75}

%   53C22   	Geodesics in global differential geometry 
%   53A05   	Surfaces in Euclidean and related space
%  	53C42   	Differential geometry of immersions (minimal, prescribed curvature, tight, etc.)
% 	58E10   	Variational problems in applications to the theory of geodesics (problems in one independent variable)
%   28A75   	Length, area, volume, other geometric measure theory

\keywords{Willmore energy, shortest closed geodesic, injectivity radius, systole}

\date{\today}

%\dedicatory{This paper is dedicated to our advisors.}

\begin{abstract}
We prove a lower bound on the length of closed geodesics 
for spheres with Willmore energy below $6\pi$. The energy threshold is optimal and the inequality cannot be extended to surfaces of higher genus. Moreover, we discuss consequences for the injectivity radius. 
\end{abstract}
\maketitle

\section{Introduction}
Let $(\Sigma,g)$ be a 
Riemannian surface.
The length of an immersed curve $\gamma\colon (a,b) \rightarrow \Sigma$ is given by
\begin{equation}
    \mathcal L_g(\gamma)\vcentcolon=\int_\gamma \mathrm{d} s_g\vcentcolon= \int_a^b\sqrt{g(\dot\gamma,\dot\gamma)}\,\mathrm dt>0.
\end{equation}
% Y.2
% Critical points of the length functional $\mathcal{L}_g$ are called \emph{geodesics} and play a key role in the local as well as the global analysis of Riemannian manifolds.
Immersed curves satisfying $\nabla_{\dot\gamma}\dot\gamma=0$ are critical points of the length functional $\mathcal{L}_g$. Referred to as \emph{geodesics}, they 
play a key role in the local and
global analysis of Riemannian manifolds.
% Y.3
% Such a 
A \emph{closed} geodesic is a periodic geodesic $\gamma\in C^\infty(\mathbb{S}^1;\Sigma)$.
% Z.3
% Closed geodesics always exist if $\Sigma$ is closed. %(i.e.\ compact and without boundary). 
% X.1 Z.4 Y.5
% Indeed, if $\Sigma$ is homeomorphic to the $2$-sphere, this  
% is a consequence of the theorem of the three geodesics, whereas in the higher genus case this follows from minimizing $\mathcal L_g$ in a nontrivial homotopy class.
Minimizing $\mathcal{L}_g$ in a nontrivial homotopy class yields the existence of closed geodesics on surfaces with nontrivial topology. For spheres, this was shown by Birkhoff  \cite{Birkhoff1917TAMS}.
%In 1917, Birkhoff \cite{Birkhoff1917TAMS} proved that on closed Riemannian surfaces, closed geodesics always exist.}
We may thus define the \emph{length of the shortest closed geodesic}
\begin{equation}\label{eq:def_ell}
    \ell(\Sigma,g)\vcentcolon = \inf\{\mathcal L_g(\gamma)\mid \gamma\colon\mathbb S^1\to(\Sigma,g)\text{ closed geodesic}\}.
\end{equation}
An important task is to relate $\ell(\Sigma,g)$ to other geometric quantities of the surface, e.g.\ its area, diameter, or curvature. Indeed, 
the 2-dimensional version of a question asked by Gromov \cite{Gromov1983} in 1983 %[p.135]
in all dimensions is an upper bound of the form
\begin{align}\label{eq:ell_upper_bound}
    \ell(\Sigma,g)\leq C \sqrt{\mu_g(\Sigma)}.
\end{align}
 Here $\mu_g$ denotes the Riemannian measure induced by the metric $g$.
For $\Sigma=\mathbb{S}^2$, Croke \cite{Croke88} proved \eqref{eq:ell_upper_bound} with a nonoptimal universal constant. %, without any bounds on the curvature. 
For further literature, see \cites{MR2231630,CalabiCao92,CrokeKatzSurvey}, and references therein.

On the other hand, lower bounds on $\ell(\Sigma,g)$ play a vital role, for example in Cheeger's finiteness theorem \cite{Cheeger1970}, where an $L^\infty$-bound on the curvature is assumed.
If the Gauss curvature is \emph{pinched}, i.e.\ if
\begin{equation}\label{eq:intro:pinched}
    1/4 \leq K \leq 1 \quad \text{on }\Sigma,
\end{equation}
a result by Klingenberg \cite{Klingenberg61} yields that the injectivity radius $i(\Sigma,g)$ satisfies $i(\Sigma,g)\geq \pi,$ and thus one immediately concludes $\ell(\Sigma,g)\geq 2\,i(\Sigma,g)\geq 2\pi$.  

In this paper, we consider the case where $(\Sigma,g)$ is isometrically immersed in $\R^n$. We show that in order to bound $\ell(\Sigma,g)$ from below, the pointwise bounds on the Gauss curvature, cf.\ \eqref{eq:intro:pinched}, can be replaced by an upper bound on the $L^2$-norm of the mean curvature. We emphasize that for variational settings such $L^2$-curvature bounds are much more favorable than pointwise bounds. 

To state our main result, we briefly recall the notion of curvature for immersed surfaces in $\R^n$. For an immersion $f\colon\Sigma \to\R^n$ of a 2-dimensional manifold $\Sigma$, we denote by $g=g_f$ the pullback metric along $f$ of the Euclidean inner product $\langle\cdot,\cdot\rangle$ in $\R^n$. Moreover, the second fundamental form $A=A_f$ gives rise to the mean curvature vector 
 $H=H_f=\mathrm{tr}_{g_f}(A_f)$ and the Gauss curvature $K=K_f$, defined by $K \vcentcolon = \langle A(\tau_1,\tau_1), A(\tau_2,\tau_2)\rangle - \langle A(\tau_1,\tau_2), A(\tau_1,\tau_2)\rangle$, where $\tau_1,\tau_2$ is an orthonormal frame.
 Note that $K_f=K_{g_f}$ is determined by the metric by Gauss's Theorema Egregium. Whenever there is no ambiguity, we will omit the dependence on $f$ and $g_f$.
The area and Willmore energy of $f$ are defined by
\begin{equation}
    \mathcal A(f) \vcentcolon = \mu(\Sigma),\qquad \mathcal W(f)\vcentcolon= \frac{1}{4}\int_\Sigma|H|^2\,\mathrm d\mu.
\end{equation}
If $\Sigma$ is closed, then $\mathcal W(f) \geq 4\pi$ with equality if and only if $f$ parametrizes a round sphere, see %\cite[Theorem 7.2.2]{Willmore_Riemannian}. Y.6
\cite[Theorem 7.2.2]{Willmore_Riemannian} and \cite[Theorem 3]{Chen1971}.
Thus, $\mathcal{W}$ quantifies the defect of a surface to be round.

\subsection{Main result}
Our main result provides a lower bound on $\ell(\Sigma,g)$ where  
the constant depends only on the Willmore energy, without assuming any pointwise bounds on the curvature. Since the Willmore energy is critical for the Sobolev embedding, hence does not control the metric uniformly, the existence of such a lower bound is %highly Y. 8
nontrivial.
\begin{theorem}\label{thm:main}
    There exists a constant $C(n)>0$ such that for all immersions $f\colon \mathbb{S}^2\to \R^n$ with $\mathcal{W}(f)< 6\pi$
    we have
    \begin{align}
        \ell(\mathbb{S}^2,g_f) \geq C(n) \left(6\pi - \mathcal{W}(f)\right) \sqrt{\mathcal{A}(f)}.
    \end{align}
\end{theorem}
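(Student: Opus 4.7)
Let $\gamma\colon \mathbb{S}^1\to \mathbb{S}^2$ be a shortest closed geodesic of length $\ell$. Using the classical fact that a shortest closed geodesic on the 2-sphere is a simple curve, we may assume $\gamma$ separates $\mathbb{S}^2$ into two topological disks $D_1, D_2$. The image $c\vcentcolon= f\circ\gamma$ is a closed curve in $\R^n$ of length $\ell$; since $\gamma$ is a geodesic and $f$ is an isometric immersion, the Euclidean curvature vector of $c$ equals $A_f(\dot\gamma,\dot\gamma)$ along $c$, and Fenchel's total curvature inequality gives
\begin{equation}\label{plan:fenchel}
\int_\gamma \lvert A_f(\dot\gamma,\dot\gamma)\rvert\,\mathrm ds \;\geq\; 2\pi.
\end{equation}

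\textbf{Reformulation in terms of }$\int\lvert A\rvert^2$\textbf{.} In any codimension one has the algebraic identity $\lvert H\rvert^2 = \lvert A\rvert^2 + 2K$. Applying Gauss--Bonnet to each disk $D_i$ (the boundary term vanishes because $\gamma$ is a geodesic) gives $\int_{D_i} K\,\mathrm d\mu = 2\pi$, and summing yields
\begin{equation*}
    4\,\mathcal{W}(f) \;=\; \int_\Sigma \lvert A_f\rvert^2\,\mathrm d\mu + 8\pi.
\end{equation*}
Consequently, Theorem~\ref{thm:main} is equivalent to the sharp lower bound
\begin{equation}\label{plan:goal}
    \int_\Sigma \lvert A_f\rvert^2\,\mathrm d\mu \;\geq\; 16\pi - C(n)\,\frac{\ell}{\sqrt{\mathcal{A}(f)}}.
\end{equation}
The constant $16\pi$ is exactly $\int\lvert A\rvert^2$ for two disjoint round spheres, that is, the (singular) bubble-tree limit obtained as $\ell\to 0$ by pinching $\gamma$; \eqref{plan:goal} is the quantitative counterpart of this intuition.

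\textbf{Proof of \eqref{plan:goal}.} To each immersed disk $f|_{D_i}$ I would apply a Li-Yau-type monotonicity formula with boundary at a suitable interior basepoint $p_i$ where $f|_{D_i}$ has density $1$. This produces an inequality of the schematic form
\begin{equation*}
    8\pi \;\leq\; \int_{D_i}\lvert A_f\rvert^2\,\mathrm d\mu + \mathrm{BT}_i(c),
\end{equation*}
where $\mathrm{BT}_i(c)$ is a boundary integral along $c$. Using \eqref{plan:fenchel} together with Cauchy--Schwarz and a scale-invariant Simon-type extrinsic estimate (such as $\mathrm{diam}(f(\mathbb{S}^2))^2\leq C(n)\mathcal{A}(f)\mathcal{W}(f)$) one should be able to bound $\mathrm{BT}_i(c)\leq C(n)\ell/\sqrt{\mathcal{A}(f)}$. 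Summing over $i=1,2$ delivers \eqref{plan:goal} and thus the theorem.

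\textbf{Main obstacle.} The technical heart is to obtain the correct coefficient $8\pi$ (not merely $4\pi$ from naive Li-Yau at a single density-$1$ point) per disk in the monotonicity-with-boundary step, and simultaneously to verify that $\mathrm{BT}_i(c)$ has exactly the scaling $\ell/\sqrt{\mathcal{A}(f)}$ rather than $\ell$ or $\ell^{1/2}$. The geodesic condition on $\gamma$ should be essential here since it makes the tangential part of the boundary term vanish; the scale invariance of $\mathcal{W}$ singles out $\ell/\sqrt{\mathcal{A}(f)}$ as the natural small parameter controlling the bubble-tree degeneration.
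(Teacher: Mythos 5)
There are two genuine gaps, and together they undercut the proposed strategy rather than just leaving technical work to do. First, your opening reduction --- that a shortest closed geodesic on a Riemannian $2$-sphere is simple --- is not a classical fact. It is known under extra hypotheses such as $K\geq 0$ (Calabi--Cao), but a pointwise curvature condition cannot be deduced from a Willmore energy bound (cf.\ \Cref{ex:Toro}), and in general shortest closed geodesics may be noninjective (cf.\ the reference to Pitts in the introduction). Handling noninjective geodesics is exactly the main technical content of the paper: the tiling of $\mathbb{S}^2\setminus\gamma(\mathbb{S}^1)$ and the curvature bound $\int_{D_i}K\,\mathrm d\mu<2\pi$ for each tile (\Cref{lem:42}, resting on the angle estimate of \Cref{lem:nagativeAngle}). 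Assuming simplicity removes the heart of the problem, not a harmless technicality.

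Second, and independently, the per-disk inequality you flag as the ``main obstacle'', namely $8\pi\leq\int_{D_i}|A|^2\,\mathrm d\mu+\mathrm{BT}_i$ with $\mathrm{BT}_i\leq C(n)\,\ell/\sqrt{\mathcal A(f)}$, is false, so no refinement of Li--Yau/Simon monotonicity can deliver it. Since $\partial D_i$ is a geodesic, your own Gauss--Bonnet computation gives $\int_{D_i}|A|^2\,\mathrm d\mu=4\mathcal W(f,D_i)-4\pi$, so your claim amounts to $\mathcal W(f,D_i)\geq 3\pi-o(1)$ for \emph{each} of the two disks as $\ell\to0$ (with $\mathcal A(f)=1$). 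But the optimality construction behind \Cref{ex:6pi_opt} (unit sphere, catenoidal neck, short cylinder, hemispherical cap of radius $a$, geodesic of length $2\pi a$) splits the limiting energy $6\pi$ asymmetrically as $4\pi+2\pi$: the small disk is essentially a round spherical cap with $\mathcal W\approx 2\pi$, i.e.\ $\int|A|^2\approx 4\pi$, while $\ell\to0$. Relatedly, the boundary term of any monotonicity formula based at a point of the small disk cannot be of size $\ell/\sqrt{\mathcal A(f)}$: every point there lies within distance $O(a)$ of $f(\partial D_i)$, so the term $\int_{\partial D_i}|f-f(x_0)|^{-1}\,\mathrm ds$ is of order $1$, not of order $\ell$. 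This is why the paper treats the two sides differently: monotonicity (\Cref{lem:24}) is applied only to the \emph{largest} component, after first securing uniform lower bounds on its area and diameter and an interior point far from the boundary (\Cref{lem:22}, Simon's diameter bound, \Cref{lem:23}), which makes the boundary term $\lesssim\ell$ and yields $\mathcal W\geq 4\pi-C\ell$ there; the complement is bounded below by $2\pi$ using only the tiling estimate and $K\leq\frac14|H|^2$ (\Cref{lem:W_in_S_Di}). Your global identity $4\mathcal W(f)=\int_\Sigma|A|^2\,\mathrm d\mu+8\pi$ and the Fenchel inequality are correct but do not help close either gap.
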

% F.1
This describes a level of roundness of surfaces with small Willmore energy which resembles the De Lellis--Müller rigidity result \cite{DeLellisMueller05} and its higher codimension analogue by Lamm--Schätzle \cite{LammSchaetzle14}. Note that for $f\colon\mathbb{S}^2\to\R^n$, $n\geq 5$, the condition $\mathcal{W}(f)< 6\pi$ is equivalent to the assumption used in \cite[Theorem 1.2]{LammSchaetzle14} to control the conformal parametrization.
On the other hand, if $\Sigma$ has genus $p\geq 1$, and $f\colon \Sigma \to\R^3$ is an immersion, then we have $\mathcal{W}(f)\geq 2\pi^2 > 6\pi$ by the resolution of the Willmore conjecture due to Marques--Neves \cite{MarquesNeves14}.

We do not know the optimal constant in \Cref{thm:main}. However, our proof yields a lower bound for $C(n)$, see \Cref{rem:opt_C}, which is bounded away from zero, uniformly in $n\in\N$. See also \Cref{thm:main_inj} if the shortest closed geodesic is \emph{embedded}, i.e.\  has no self-intersections.
Nevertheless, the energy threshold of $6\pi$ in \Cref{thm:main} is sharp, see \Cref{ex:6pi_opt}. Moreover, as a consequence of the noncompactness of the invariance group of the Willmore energy, the inequality cannot be generalized to higher genus surfaces in any codimension, see \Cref{ex:genus_zero}, so also $\Sigma=\mathbb{S}^2$ is sharp.

The idea of the proof of \Cref{thm:main} is easily illustrated if only \emph{embedded} closed geodesics are considered. 
%\red{the shortest closed geodesic has no self-intersection.}
Indeed, an embedded closed geodesic splits the sphere $\mathbb{S}^2$ into two topological disks both of which have Willmore energy at least $2\pi$ as a consequence of the Gauss--Bonnet theorem %Y. 13
and the fact that $\frac{1}{4}|H|^2\geq K$. If the geodesic becomes very short, then any disk with a uniform lower area bound contributes nearly $4\pi$ Willmore energy as a consequence of elementary diameter bounds and Simon's monotonicity formula. For the details, 
see the proof of \Cref{thm:main_inj}.
The main difficulty arises from the case where the shortest closed geodesic has self-intersections, which it might have, in general, cf.\  \cite[p.\ 31]{Pitts}. However, also in this case we can identify a suitable tiling of $\mathbb{S}^2$ and control the curvature in the resulting parts individually, see \Cref{sec:tiling}.

Combining \Cref{thm:main} with an estimate due to Klingenberg (see for instance \cite[Lemma 6.4.7]{Petersen}), we obtain the following lower bound on the injectivity radius. 

\begin{cor}\label{cor:main}
    With $C(n)>0$ as in \Cref{thm:main}, for all immersions $f\colon\mathbb{S}^2\to\R^n$ 
    we have
    \begin{equation}\label{eq:Kling_inj_est}
        i(\mathbb S^2,g_f) \geq \min\left\{\frac{\pi}{\sqrt{\max K_f}}, \frac{C(n)}{2}(6\pi - \mathcal W(f))\sqrt{\mathcal A(f)}\right\}.
    \end{equation}
\end{cor}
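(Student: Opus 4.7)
The plan is to combine Klingenberg's classical injectivity radius estimate (in the form of Lemma 6.4.7 in Petersen) with \Cref{thm:main} directly. Klingenberg's lemma, applied to the closed Riemannian surface $(\mathbb{S}^2, g_f)$, states that
\begin{equation*}
    i(\mathbb{S}^2, g_f) \geq \min\left\{\frac{\pi}{\sqrt{\max K_f}}, \frac{1}{2}\ell(\mathbb{S}^2, g_f)\right\}.
\end{equation*}
Here $\max K_f$ is well-defined and strictly positive, since by Gauss--Bonnet $\int_{\mathbb{S}^2} K_f \,\mathrm{d}\mu = 4\pi > 0$ and $K_f$ is continuous on the compact manifold $\mathbb{S}^2$, so $\pi/\sqrt{\max K_f}$ is a finite positive number.

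The second step is simply to substitute the lower bound from \Cref{thm:main}, namely $\ell(\mathbb{S}^2, g_f) \geq C(n)(6\pi - \mathcal{W}(f))\sqrt{\mathcal{A}(f)}$, into the minimum above. Since the minimum is a monotone operation in each argument, replacing $\tfrac{1}{2}\ell(\mathbb{S}^2, g_f)$ by the (possibly smaller) quantity $\tfrac{1}{2}C(n)(6\pi - \mathcal{W}(f))\sqrt{\mathcal{A}(f)}$ preserves the inequality, giving exactly \eqref{eq:Kling_inj_est}.

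The only subtlety to address is the degenerate regime $\mathcal{W}(f) \geq 6\pi$, in which case the second entry in the minimum of \eqref{eq:Kling_inj_est} is nonpositive and the asserted bound becomes trivial, because $i(\mathbb{S}^2, g_f) > 0$ for any smooth immersion of a closed manifold. In the nontrivial regime $\mathcal{W}(f) < 6\pi$ the substitution above yields a genuine positive lower bound. I do not foresee any real obstacle beyond quoting the two ingredients correctly; the argument is a one-line combination of \Cref{thm:main} with a standard Riemannian comparison estimate.
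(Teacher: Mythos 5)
Your proof is correct and matches the paper's own (implicit) argument: the corollary is obtained precisely by combining Klingenberg's injectivity radius estimate $i(\mathbb{S}^2,g_f)\geq\min\{\pi/\sqrt{\max K_f},\,\ell(\mathbb{S}^2,g_f)/2\}$ with \Cref{thm:main}. The additional remarks you make (positivity of $\max K_f$ via Gauss--Bonnet and triviality of the bound when $\mathcal{W}(f)\geq 6\pi$) are sound and in the spirit of what the paper leaves to the reader.
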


\subsection{Optimality discussion}\label{sec:optimality}

We now illustrate the optimality of the assumptions in \Cref{thm:main} by a set of examples. 

\begin{example}[Optimality of the $6\pi$-threshold]\label{ex:6pi_opt}
    For each $N\in\N$ there exists a family of smooth embeddings $f_a\colon\mathbb S^2\to\R^3$, $0<a<a_0$, and corresponding closed geodesics $\gamma_a\colon\mathbb S^1\to(\mathbb S^2,g_{f_a})$, each of which having exactly $N$ distinct self-intersections, such that 
    % Z.8
    \begin{equation}\label{eq:intro:ex:sharpness}
        \lim_{a\to0}\mathcal L_{g_{f_a}}(\gamma_a)/\sqrt{\mathcal A(f_a)} = 0,\qquad \lim_{a\to 0}\mathcal W(f_a) = 6\pi.
    \end{equation}
    % \begin{equation}\label{eq:intro:ex:sharpness}
    %     \lim_{a\to0}\mathcal L_{g_{f_a}}(\gamma_a) = 0,\qquad \lim_{a\to 0}\mathcal W(f_a) = 6\pi, \qquad \lim_{a\to 0}\mathcal A(f_a) = \red{4\pi}.
    % \end{equation}
\end{example}

We sketch the construction. Consider the following  pieces of surfaces.
A capped unit sphere; a piece of a catenoid; a cylinder of radius $a>0$ and height $2a$; half a sphere of radius $a$:
\begin{align}
    \Sigma_1 &= \{(x_1,x_2,x_3)\in\R^3\mid x_1^2+x_2^2+x_3^2=1,\,x_3 \leq 1-s_a \};\\% \quad \quad \quad \quad \quad \quad \quad \quad \quad \\
    \Sigma_2 &= \{(a\cosh(t/a)\cos(\theta),a\cosh(t/a)\sin(\theta),t)\mid t\in [-t_a,0], \theta\in [0,2\pi)\};\\
    %- t_a \leq t \leq 0,\,0\leq\theta<2\pi\};\\
    \Sigma_3&=\{(x_1,x_2,x_3)\in\R^3\mid x_1^2+x_2^2 = a^2,\,|x_3|\leq a\};\\% \quad \quad \quad \quad \quad \quad \quad \quad \quad\\ 
    \Sigma_4&=\{(x_1,x_2,x_3)\in\R^3\mid x_1^2+x_2^2+x_3^2=a^2, x_3 \geq 0\}.% \quad \quad \quad \quad \quad \quad \quad \quad \quad
\end{align}
Clearly, $\Sigma_3$ contains a closed circular geodesic $\gamma_a$ of length $2\pi a$. If $s_a,t_a$ are chosen in a suitable way, after rotation and translation the four pieces can be glued together with $C^{1,1}$-regularity.
After smoothing at the gluing regions without affecting the geodesic $\gamma_a$, %and rescaling, 
we thus obtain immersions $f_a\colon \mathbb{S}^2\to\R^3$ with corresponding closed geodesics $\gamma_a\colon \mathbb{S}^1\to (\mathbb{S}^2,g_{f_a})$. Noting that the catenoidal part $\Sigma_2$ carries zero Willmore energy, we conclude that
 \begin{equation}
        % Z.8
       \lim_{a\to 0}\mathcal L_{g_{f_a}}(\gamma_a)/\sqrt{\mathcal{A}(f_a)} = 0,\qquad \lim_{a\to 0}\mathcal W(f_a) = 6\pi,
        % \lim_{a\to 0}\mathcal L_{g_{f_a}}(\gamma_a) = 0,\qquad \lim_{a\to 0}\mathcal W(f_a) = 6\pi, \qquad \lim_{a\to 0}\mathcal A(f_a) = \red{4\pi},
\end{equation}
\begin{figure}
     \centering
     \hfill
     \begin{subfigure}[b]{0.45\textwidth}
         \centering
         \def\svgwidth{0.8\linewidth}
         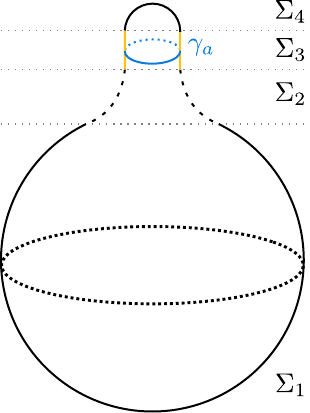
         \caption{A closed geodesic $\gamma_a$ of length $2\pi a$ on $\Sigma_3$.}
          \label{fig:Sigma_a}
     \end{subfigure}
     \hfill
     \begin{subfigure}[b]{0.45\textwidth}
         \centering
         \def\svgwidth{0.8\linewidth}
         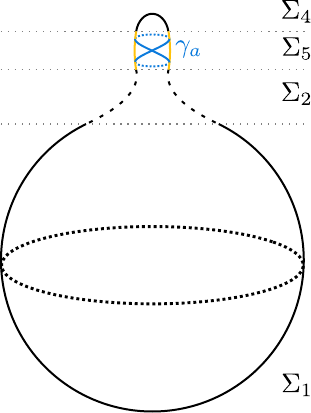
         \caption{A %noninjective closed geodesic $\gamma_a$ on $\Sigma_5$.
         geodesic $\gamma_a$ on $\Sigma_5$ with self-intersection.}
         \label{fig:Sigma_5}
     \end{subfigure}
     \hfill
     \caption{Short closed geodesics on $\Sigma_3$ and $\Sigma_5$.}
     \label{fig:Counterexp}
 \end{figure}%
see \Cref{fig:Sigma_a}.
For $b>0$, we may replace $\Sigma_3$ with a suitably small spheroid % Z.7
(i.e.\ a rotationally symmetric ellipsoid) of the form
\begin{equation}\label{eq:intro:spheroidal}
    \Sigma_5=\{a(\cos(t)\cos(\theta),\cos(t)\sin(\theta), b\sin(t))\mid |t|\leq a,\, \theta\in [0,2\pi)\}, %0\leq\theta<2\pi\},
\end{equation}
where $\Sigma_2$, $\Sigma_4$ need to be adjusted accordingly, see \Cref{fig:Sigma_5}.
Choosing $b=b(a,N)$ as in \Cref{lem:spheroids} below (with $\varepsilon=a$),
we may even achieve that the geodesics $\gamma_a$ have exactly $N\in\N$ distinct self-intersections, cf.\ \Cref{fig:spheroid_plots}, while still being arbitrarily short by \eqref{eq:spheroid_length_geod}. This completes the discussion of \Cref{ex:6pi_opt} up to  \Cref{lem:spheroids} which we prove in \Cref{sec:spheroids} below.

\Cref{thm:main} cannot be generalized to surfaces with higher genus. Indeed, by \cite{Simon,Kusner,BauerKuwert} each orientable closed surface of genus $p\geq 1$ admits an embedding % X.7
into $\R^n$, $n\geq 3$, of minimal Willmore energy. After replacing some part of a minimizer with a small flat disk, a suitable Möbius transformation makes the minimizer look like a unit sphere with $p$ tiny handles attached. If done properly, this even leads to the existence of arbitrarily short geodesics that are null-homotopic. The details of the following example are discussed in \Cref{sec:optimality_genus} below.

\begin{example}[Optimality of zero genus]\label{ex:genus_zero}
    For all $\varepsilon>0$ and each closed, connected, and orientable surface $\Sigma$ of genus $p\geq 1$ there exists an embedding $f_\varepsilon\colon\Sigma\to\mathbb \R^n$ such that 
    \begin{equation}
        \mathcal W(f_\varepsilon) \leq \min % X.7
        \{\mathcal W(f)\mid f\colon\Sigma\to\R^n\text{ immersion}\} +\varepsilon,
        % \mathcal A(f_\varepsilon) = \red{4\pi}, \qquad \mathcal W(f_\varepsilon) \leq \textcolor{red}{\min} % X.7
        % \{\mathcal W(f)\mid f\colon\Sigma\to\R^n\text{ immersion}\} +\varepsilon, 
    \end{equation}
    % Z.8
    and a % injective 
    null-homotopic geodesic $\gamma\colon \mathbb S^1 \to \mathbb (\Sigma,g_{f_\varepsilon})$ %without self-intersections 
    such that we have
    %with 
    $\mathcal L_{g_{f_\varepsilon}}(\gamma)/\sqrt{\mathcal{A}(f_\varepsilon)}<\varepsilon$.
\end{example}

On round spheres, the absolute minimizer of $\mathcal{W}$, 
all closed geodesics are embedded.
However, small Willmore energy does not rule out the existence of closed geodesics with self-intersections.
\begin{example}[Small Willmore energy and self-intersections]
\label{ex:13}
    For all $N\in\N$ and $\varepsilon>0$ there exists an embedding $f\colon\mathbb S^2\to\mathbb R^3$ %with $\mathcal{A}(f)=1$ Z.8
    which contains a closed geodesic with exactly $N$ distinct self-intersections and $\mathcal W(f)<4\pi + \varepsilon$.
\end{example}

Indeed, consider a thin slab of width $2\varepsilon$ of the spheroid $\Sigma_5$ as in \eqref{eq:intro:spheroidal} with $a=1$, centered around the equator. Choosing $b=b(N,\varepsilon)$ as in \Cref{lem:spheroids},  yields the existence of closed geodesics on this slab with exactly $N\in \N$ self-intersections. The Willmore energy of the slab is of order $\varepsilon$. Hence gluing it together with two spherical caps, we see that the Willmore energy is arbitrarily close to $4\pi$, and the statement follows from \Cref{lem:spheroids}.
%and letting $a\to 0$, X.3
% \Cref{lem:spheroids} yields the following example which shows that small Willmore energy does not rule out the existence of noninjective closed geodesics.

Since these geodesics intersect many times, they 
are unlikely to realize \eqref{eq:def_ell}. While shortest closed geodesics might % be noninjective 
have self-intersections in general (recall the \emph{tree-legged starfish}, see \cite[p.\ 31]{Pitts}, \cite[Figure 2]{CalabiCao92}), % X.5
they are embedded if $K\geq 0$ by a result of Calabi--Cao \cite{CalabiCao92}; however, such a pointwise curvature control is impossible to deduce from smallness of the Willmore energy, see \Cref{ex:Toro} below. This naturally leads to the following open problem.
\begin{question}\label{question:16}
    Let $f\colon\mathbb S^2 \to \R^n$ be an immersion %with $\mathcal A(f)=1$ Z.8
    such that the shortest closed geodesic on $(\mathbb S^2,g_f)$ has self-intersections. Does there exist a universal constant $C>4\pi$ such that $\mathcal{W}(f)\geq C$?
\end{question}
While we do not know exactly the Willmore energy of the three-legged starfish, heuristically, each of its legs should have at least the Willmore energy of a hemisphere. Thus, it might be tempting to conjecture that $C\geq 6\pi$ in \Cref{question:16}. % X.5

Finally, we discuss the optimality of \Cref{cor:main}. The first term in the minimum in \eqref{eq:Kling_inj_est} could be dropped if the Gauss curvature was suitably bounded from above. However, this is generically not possible, even for arbitrarily small Willmore energy. 

\begin{example}[Unbounded Gauss curvature]\label{ex:Toro}
    For all $\varepsilon>0$ there exists an embedding $f\colon \mathbb S^2\to\mathbb R^3$ with $\mathcal W(f)<4\pi + \varepsilon$ and $\mathcal A(f) = 4\pi$ such that $\max K_f>1/\varepsilon$ and $\min K_f<-1/\varepsilon$.
\end{example}
This follows from flattening a small disk on a unit sphere as in \Cref{lem:replacement} and then replacing it with a part of the graph of the function $u$ from Toro's example \cite[Example 1]{Toro}. 
Upon rescaling $u$ and smoothing in a small neighborhood of the origin, we obtain an immersion with arbitrarily small Willmore energy, see \cite[Example 1]{DGR2017}. The statement of \Cref{ex:Toro} thus follows from the fact that the Gauss curvature becomes very large near the origin as we show in \Cref{lem:Toro}. % X.3

The right hand side of \eqref{eq:Kling_inj_est} for \Cref{ex:Toro} equals $\pi/\sqrt{\max K_f}$ which is less than $\pi\sqrt{\varepsilon}$. However, we could not verify that \Cref{ex:Toro} indeed has small injectivity radius which leaves the following problem unsolved.

\begin{question}\label{question:injectivity_radius}
    Let $f_k\colon\mathbb S^2\to\mathbb R^n$ be a sequence of immersions such that
    $i(\mathbb S^2,g_{f_k}) / \sqrt{\mathcal{A}(f_k)}\to 0$ as $k\to\infty$.
    %\lim_{k\to\infty}i(\mathbb S^2,g_{f_k}) = 0$ and $\mathcal A(f_k)=\red{4\pi}$. Z. 8
    Does there exist a universal constant $C>4\pi$ such that $\liminf_{k\to\infty} \mathcal W(f_k) \geq C$?
\end{question}
Note that if $\limsup_{k\to\infty}\mathcal{W}(f_k)<6\pi$, by \Cref{cor:main} the Gauss curvatures of such a sequence necessarily degenerate in the sense that $\lim_{k\to\infty}\max K_{f_k} = \infty$.

\section{Preliminaries}
Throughout this article, we denote by $\Sigma$ a compact, connected, and orientable two-dimensional smooth manifold without boundary.
Let $f\colon\Sigma\to \R^n$ be an immersion.
For an open set $D \subset \Sigma$ we define
\begin{equation}
    \mathcal{A}(f,D) \vcentcolon = \int_D 1 \; \mathrm{d}\mu , \quad 
    \mathcal{W}(f,D) \vcentcolon = \frac{1}{4} \int_D  |H|^2 \; \mathrm{d}\mu.
\end{equation}
Further, we denote $PS(\mathbb{S}^1;\Sigma) \vcentcolon= \{ \gamma \in C^0(\mathbb{S}^1;\Sigma) : \gamma \textrm{ piecewise smooth} \}$. We view $\mathbb{S}^1 = \R/ 2\pi\Z$ and say that $\gamma\colon \mathbb{S}^1\to\Sigma$ is \emph{simply covered} if the period of $\gamma\colon [0,2\pi]\to\Sigma$ is $2\pi$. In the sequel we will often consider domains $D$ that satisfy 
\begin{align}\label{eq:adm_boundary} \tag{\texttt{A}}
\begin{split}
  & \textrm{Each connected component of }  \partial D  \; \textrm{can be parametrized by}\\
  & \textrm{a piecewise immersed simply covered curve $\gamma \in PS(\mathbb{S}^1;\Sigma)$}\\
  &\textrm{having only finitely many self-intersections}. 
\end{split}
\end{align}
An example for such a domain $D$ is depicted in \Cref{fig:D_nonemb_bdry}.
\emph{Piecewise immersed} in %this context
\eqref{eq:adm_boundary} means that $\dot{\gamma}(t) \neq 0$ for all but finitely many $t \in \mathbb{S}^1$. 
If $D$ satisfies \eqref{eq:adm_boundary} then it is easily seen to be a \emph{manifold with thin singular set}, in the sense of \cite[Chapter XII, 3] {AmannEscher}. The length $\mathcal{L}_g(\partial D)$ is defined as the sum of the lengths of the parametrizations chosen as in \eqref{eq:adm_boundary}. 
If it is possible to choose all curves $\gamma$ in \eqref{eq:adm_boundary} without self-intersections then we say that $D$ has (piecewise smooth) \emph{embedded}  \emph{boundary}.  
Moreover, since $f\colon (\Sigma, g_f)\to (\R^n, \langle\cdot,\cdot\rangle)$ is an isometry, the intrinsic and extrinsic lengths coincide
\begin{equation}\label{eq:star}
    \mathcal{L}(f\circ \gamma) \vcentcolon= \mathcal{L}_{\langle\cdot,\cdot\rangle}(f\circ \gamma) = \mathcal{L}_{g_f}(\gamma)
\end{equation}
 for any immersed curve $\gamma\colon(a,b)\to \Sigma$.
 % Q.4
 The (extrinsic) diameter of $A\subset \R^n$ will be denoted with
 \begin{equation}
     \diam(A)\vcentcolon=\sup_{x,y \in A}|x-y|.
 \end{equation}
\subsection{The Gauss divergence formula and consequences}
%Let $f\colon \Sigma \rightarrow \mathbb{R}^n$ be an immersion
Suppose that $f\colon \Sigma \rightarrow \mathbb{R}^n$ is an immersion
 and suppose that $D \subset \Sigma$ %satisfying 
% Z. 10
satisfies \eqref{eq:adm_boundary}. 
For $\phi \in C^\infty(\Sigma;\mathbb{R}^n)$ we define $\mathrm{div}\phi(x) \vcentcolon = \sum_{i = 1}^2 \langle \partial_{\tau_i} \phi(x), \partial_{\tau_i} f(x)\rangle$, where $\{ \tau_1,\tau_2\}$ is an orthonormal basis of $T_x \Sigma$. We define the \emph{outward pointing unit normal} $\nu(x)= \nu_D(x)$ as in \cite[p. 412]{AmannEscher} for all but finitely many points in $x \in \partial D$. For such $x$ we also consider the \emph{unit conormal} $\eta(x) \vcentcolon = \partial_{\nu} f(x)\in \mathbb{S}^{n-1}$. With the aid of \cite[Theorem XII.3.15]{AmannEscher} one can derive the \emph {divergence theorem} for immersed surfaces 
\begin{equation}
    \int_D \mathrm{div} \phi \; \mathrm{d}\mu = \int_{\partial D} \langle\phi, \eta\rangle \;  \mathrm{d}s- \int_D \langle\phi, H\rangle \; \mathrm{d}\mu \qquad \forall  %\quad \quad  \quad \textrm{for all } 
    \phi \in C^\infty(\Sigma; \mathbb{R}^n). \label{eq:divsatz}
\end{equation}
Here $\mathrm{d}s = \mathrm{d}s_{g_f}$ denotes the arc length element of the parametrizations $\gamma: \mathbb{S}^1 \rightarrow (\Sigma,g_f)$ chosen as in \eqref{eq:adm_boundary}.
% We gather some useful consequences of this formula in the following. Z.11
The divergence theorem \eqref{eq:divsatz} implies \emph{Simon's monotonicity formula}, which has first been obtained in \cite{Simon} and later been extended to the case of manifolds with boundary, cf. \cite[Lemma A.3]{Tristan}, \cite{Volkmann}. 
% An immediate consequence of \cite[Equation (18)]{Volkmann} is the following estimate, whose proof is %safely
% % Z.13
% omitted.
It allows for useful diameter estimates for subsets satisfying \eqref{eq:adm_boundary}.
\begin{lemma}\label{lem:22_new}
Let $f\colon \Sigma \rightarrow \mathbb{R}^n$ be an immersion and suppose that $D \subset \Sigma$
satisfies \eqref{eq:adm_boundary}.
\begin{enumerate}[label=(\roman*)]
    \item\label{item:22_new_1} We have
    \begin{align}
         \diam(f(D)) \geq \frac{2\mathcal{A}(f,D)}{\mathcal{L}_{g}(\partial D) + 2\mathcal{W}(f,D)^\frac{1}{2} \mathcal{A}(f,D)^\frac{1}{2}}.
    \end{align}
\item\label{item:22_new_2} If $\partial D$ is connected, then there exists $x_0\in D$ such that 
\begin{align}
    \mathrm{dist}(f(x_0),f(\partial D)) \geq \frac{1}{2} \left( \diam(f(D))- \frac{\mathcal{L}_{g}(\partial D)}{2}\right).
\end{align}
\item\label{item:22_new_3} For each $x_0 \in D$ we have
\begin{equation}
    4\pi \leq \mathcal{W}(f,D) + 2 \int_{\partial D} \frac{\mathrm{d}s(x)}{|f(x)-f(x_0)|}.
\end{equation}
    \item \label{item:22_new_4} If $n\ge4$ then
\begin{equation}
    \sqrt{\mathcal A(f,D)} \le C_n\Bigl(\int_D|H|\,\mathrm d\mu + 2\mathcal L_g(\partial D)\Bigr)
\end{equation}
for $C_n\vcentcolon=\min\{\sqrt{\frac{n-2}{32\pi}},\frac{75}{\sqrt{\pi}}\}$.
\end{enumerate}
\end{lemma}
\begin{proof}
    For \ref{item:22_new_1}, let $z \in \partial D$ be arbitrary. We use $\phi(x) \vcentcolon = f(x) - f(z)$ in \eqref{eq:divsatz} and find (with $\mathrm{div}\phi \equiv 2$) 
\begin{equation}
   2 \mathcal{A}(f,D) = \int_{\partial D} \langle f(x)-f(z), \eta(x) \rangle \; \mathrm{d}s(x) - \int_D \langle f(x)-f(z), {H}(x) \rangle \; \mathrm{d} \mu(x). 
\end{equation}
Estimating $|f(x)-f(z)|\leq \diam(f(D))$ we find
\begin{equation}
    2 \mathcal{A}(f,D) \leq \diam(f(D)) \mathcal{L}_{g}(\partial D) +  \diam(f(D)) \int_D |H| \; \mathrm{d}\mu . 
\end{equation}
Using that $\int_D |H| \; \mathrm{d} \mu \leq  2\mathcal{W}(f,D)^\frac{1}{2} \mathcal{A}(f,D)^\frac{1}{2}$, the claim follows. \newline\indent
For \ref{item:22_new_2}, let $x,y \in D$, $z_1,z_2\in \partial D$. We have $$
    |f(x)-f(y)| \leq |f(x)-f(z_1)| + \diam (f(\partial D)) + |f(z_2)-f(y)|.$$
Assuming $\dist(f(x),f(\partial D)) \geq \dist (f(y),f(\partial D))$ this implies
\begin{align}
    |f(x)-f(y)| \leq 2~\dist(f(x), f(\partial D)) + \diam (f(\partial D)).
\end{align}
Let $z_1,z_2\in \partial D$ with $|f(z_1)-f(z_2)| = \diam(f(\partial D))$. 
Since $D$ satisfies \eqref{eq:adm_boundary} and $\partial D$ is connected, there exists a single curve $\gamma \in PS(\mathbb{S}^1;\Sigma)$ as in \eqref{eq:adm_boundary} such that $\partial D = \gamma(\mathbb{S}^1)$. At least one of the arcs of $\gamma$ connecting $z_1$ to $z_2$ has length at most $\mathcal{L}_g(\partial D)/2$. Using \eqref{eq:star}, we thus find
\begin{align}
    |f(x)-f(y)| &\leq 2~\dist(f(x), f(\partial D)) + |f(z_1)-f(z_2)|\\
    &\leq 2~\dist(f(x), f(\partial D)) + \frac{\mathcal{L}_g(\partial D)}{2}.
\end{align}
It follows that $\diam(f(D)) \leq 2 \max_{x\in D} \dist(f(x),f(\partial D)) +\frac{\mathcal{L}_g(\partial D)}{2}$. Since the maximum on the right hand side is attained in $D$, the claim follows.
\newline\indent
Item \ref{item:22_new_3} is an immediate consequence of the monontonicity formula with boundary, see \cite[Equation (18)]{Volkmann}. %, and is thus omitted.
\newline\indent
Item \ref{item:22_new_4} is originally due to Michael--Simon \cite[Theorem 2.1]{MichaelSimon}. A general version for surfaces with boundary which is immediately applicable for Setting \eqref{eq:adm_boundary} is given in \cite[Theorem 3.5]{MenneScharrer18} with the constant $C_n = 75/\sqrt{\pi}$. A version with the potentially smaller constant $C_n = \sqrt{(n-2)/32\pi}$ depending on the codimension is given in \cite[Theorem 1]{Brendle21JAMS} for surfaces whose  boundary is $C^{2,\gamma}$-regular for some $0<\gamma<1$. This theorem can also be applied in Setting \eqref{eq:adm_boundary} by approximating $D$ as follows. Let $x_0\in \partial D$ be one of the finitely many points at which $\partial D$ fails to be smoothly embedded. In a first step, for $r>0$ small enough, one can cut out the corner $x_0$ by choosing suitable disjoint polygons in $D\cap B_r(x_0)\setminus B_{r/2}(x_0)$ whose length is of order $r$, resulting in a domain $D'\subset D$ with $D'\cap B_{r/2}(x_0) = \emptyset$ and $D'\setminus B_r(x_0) = D\setminus B_r(x_0)$. In a second step, one mollifies $\partial D'\cap B_r(x_0)$ by choosing local graph representations of the polygons. Doing so around each corner point $x_0\in \partial D$ results in a domain $D''$ depending on $r$ whose boundary is smooth. Now \cite[Theorem 1]{Brendle21JAMS} can be applied to $D''$. Letting $r$ go to zero simultaneously around all corner points, Hypothesis \eqref{eq:adm_boundary} implies that both sides of the inequality in \ref{item:22_new_4} converge.
\end{proof}

\subsection{The Gauss--Bonnet formula}
We recall the \emph{Gauss--Bonnet formula} (cf.\ \cite[Chapter 9]{Lee}). 
Suppose that $g$ is a Riemannian metric on $\Sigma$ with induced Riemannian measure $\mu$ and Gauss curvature $K$.
Then for each piecewise immersed and simply closed curve $\gamma\in PS(\mathbb{S}^1;\Sigma)$ satisfying $\gamma(\mathbb{S}^1)=\partial \Omega$ for an open set $\Omega\subset \Sigma$ one has 
\begin{equation}\label{eq:Gauss_Bonnet}
    \int_{\Omega} K \; \mathrm{d}\mu + \int_{\gamma} \kappa \; \mathrm{d}s + \sum_{i = 1}^k \theta_i = 2\pi.
\end{equation}

For a unit speed parametrization of $\gamma$, %the term 
$\kappa(t) \vcentcolon = g\left(\frac{D}{\mathrm{d}t} \dot\gamma(t) , N(t)\right)$ is the \emph {geodesic curvature} of $\gamma$ at $t \in \mathbb{S}^1$, where 
$N(t)$ is chosen such that $(\dot\gamma(t), N(t))$ is an oriented basis of $T_{\gamma(t)}\Sigma$.
Furthermore if $\{a_1,...,a_n\}\subset \Sigma$ is the set of \emph{vertices of $\gamma$}, i.e.\ points with the property that $\gamma(t_i)=a_i$ for some $t_i\in \mathbb{S}^1$ with $\dot\gamma(t_i^+) \neq \dot\gamma(t_i^-)$, then $\theta_i \in [- \pi , \pi]$ is the \emph{exterior angle} of $\gamma$ at its vertices, meaning that
\begin{equation}
    \theta_i = \mathrm{sgn}\left(\mathrm{d}V(\dot\gamma(t_i^-), \dot\gamma(t_i^+)\right) \arccos\left( g(\dot\gamma(t_i^-), \dot\gamma(t_i^+))\right) ,
\end{equation}
where $\mathrm{d}V$ is a nonvanishing and  alternating 2-form determined by the orientation of $\Sigma$. 
We remark that the prerequisites on $\Omega$ in \eqref{eq:Gauss_Bonnet} are satisfied if and only if $\Omega$ is a topological disk, satisfies \eqref{eq:adm_boundary}, and has embedded boundary. In particular, nonembedded boundaries can not be treated with \eqref{eq:Gauss_Bonnet} at this stage.
An important consequence of \eqref{eq:Gauss_Bonnet} is the \emph{Gauss--Bonnet theorem} which says
\begin{equation}
    \int_\Sigma K \; \mathrm{d}\mu = 2\pi \chi(\Sigma),
\end{equation}
where $\chi(\Sigma)$ is the \emph{Euler characteristic} of $\Sigma$, a purely topological constant.

We now show how the previous results can be applied to prove \Cref{thm:main} under the additional assumption that the shortest closed geodesic is embedded. 
\begin{theorem}\label{thm:main_inj}
    Let $f\colon\mathbb S^2\to\R^n$ be an immersion with $\mathcal W(f)<6\pi$ and $\gamma\colon \mathbb S^1\to (\mathbb S^2,g_f)$ be a closed geodesic without self-intersections. Then
    \begin{equation}
        \mathcal L_{g_f}(\gamma) \ge C(6\pi - \mathcal W(f))\sqrt{\mathcal A(f)}
    \end{equation}
    for $C=\frac{1}{2\pi}\left(\sqrt{6\pi + \frac{2\pi}{4
    +\pi}}-\sqrt{6\pi}\right)$.
\end{theorem}
\begin{proof}
    We may assume $\mathcal A(f)=1$, after scaling. By the Jordan curve theorem, $\mathbb S^2\setminus\gamma(\mathbb S^1)$ has two connected components $D_1,D_2$, both of which are topological disks. After relabelling we have $\mathcal A(f,D_1)\ge 1/2$ and, for ${L}\vcentcolon=\mathcal L_{g_f}(\gamma)$, \Cref{lem:22_new}\ref{item:22_new_2}\ref{item:22_new_1} implies the existence of $x_0\in D_1$ such that 
    \begin{align}
        \dist(f(x_0),f(\partial D_1))&\ge \frac{1}{2}\Bigl(\diam(f(D_1)) - \frac{{L}}2\Bigr)\\
        &\ge \frac12\Bigl(\frac{1}{{L}+2\sqrt{6\pi}} -\frac{{L}}2\Bigr)=\vcentcolon a({L}).\label{eq:dist_ge_alpha}
    \end{align}
    Let $\bar{L}>0$ be such that $a(\bar{L})>0$. If ${L} < \bar{L}$, then, since $a$ is monotonically decreasing, we have that $a({L})\ge a(\bar{L})=\vcentcolon\alpha$, and \Cref{lem:22_new}\ref{item:22_new_3} combined with \eqref{eq:dist_ge_alpha} implies
    \begin{align}\label{eq:4pi_le_W(D_1)}
        4\pi \le \mathcal W(f,D_1) +2\int_{\partial D_1}\frac{\mathrm ds(x)}{|f(x)-f(x_0)|}\le \mathcal W(f,D_1) + \frac{2}{\alpha}{L}.
    \end{align}
    On the other hand, since $K \le \frac{1}{4}|H|^2$, \eqref{eq:Gauss_Bonnet} implies $2\pi \le \mathcal W(f,D_2)$. Using \eqref{eq:4pi_le_W(D_1)} it thus follows
    \begin{equation}
        {L} \ge \frac{\alpha}{2}(6\pi - \mathcal W(f)).
    \end{equation}
    If ${L}\ge \bar{L}$, then $\mathcal W(f)\ge 4\pi$ implies
    \begin{equation}
        {L} \ge \bar{L}~\frac{6\pi - 4\pi}{2\pi}\ge \frac{\bar{L}}{2\pi}(6\pi - \mathcal W(f)).
    \end{equation}
    The conclusion follows by choosing $\bar{L}>0$ such that $\frac{\bar{L}}{2\pi} = \frac{a(\bar{L})}{2}=\vcentcolon C$. 
\end{proof}

\section{The tiling induced by a closed geodesic and curvature estimates}\label{sec:tiling}

In this section, we will examine how a closed %, not necessarily injective
geodesic $\gamma\colon \mathbb{S}^1\to \Sigma$ with self-intersections divides a surface $\Sigma$ into \emph{tiles} bounded by geodesic segments. In particular, we will prove an upper bound on the total curvature in each of the resulting tiles if $\Sigma=\mathbb{S}^2$.

\begin{lemma}\label{lem:self_int_finite}
    Let $\gamma\colon \mathbb{S}^1\to (\Sigma,g)$ be a closed simply covered geodesic. Then $\gamma$ has only finitely many self-intersections all of which are nontangential.
\end{lemma}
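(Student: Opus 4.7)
The plan is to prove non-tangentiality and finiteness separately, using only the uniqueness of the geodesic initial value problem and the fact that transverse intersections of immersed curves are isolated. The main subtlety is the degenerate case in which $\gamma$ is a nontrivial iterate of a primitive closed geodesic: this would produce infinitely many tangential coincidences, and must be handled by first reducing the statement to the primitive geodesic.

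For the non-tangentiality assertion, suppose $\gamma(t_1)=\gamma(t_2)$ for distinct $t_1,t_2\in\mathbb{S}^1$ with $\dot\gamma(t_1)\parallel\dot\gamma(t_2)$. Since a geodesic has constant positive speed, this forces $\dot\gamma(t_1)=\pm\dot\gamma(t_2)$. In the minus case, uniqueness for the time-reversed geodesic gives $\gamma(t_1+s)=\gamma(t_2-s)$ on a neighborhood of $s=0$; differentiating at $s=(t_2-t_1)/2$ then yields $\dot\gamma((t_1+t_2)/2)=0$, contradicting constant positive speed. In the plus case, uniqueness gives $\gamma(t_1+s)=\gamma(t_2+s)$ locally, and a standard clopen argument in $s$ extends this globally, so $\gamma$ is periodic with subperiod $T=t_2-t_1$ and hence a nontrivial iterate of a primitive closed geodesic $\tilde\gamma$. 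The lemma for $\gamma$ then reduces to the same assertion for $\tilde\gamma$, which by the minus case above cannot exhibit any tangential self-intersections.

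For finiteness, we may therefore assume $\gamma$ is primitive and all its self-intersections are non-tangential. Set $\Delta\vcentcolon=\{(t_1,t_2)\in\mathbb{S}^1\times\mathbb{S}^1:t_1\neq t_2,\ \gamma(t_1)=\gamma(t_2)\}$ and suppose for contradiction that $\Delta$ is infinite. By compactness of $\mathbb{S}^1\times\mathbb{S}^1$, $\Delta$ admits an accumulation point $(s_1,s_2)$, and continuity yields $\gamma(s_1)=\gamma(s_2)$. If $s_1=s_2$, local injectivity of the immersion $\gamma$ near $s_1$ rules out an accumulating sequence of distinct parameters in $\Delta$. If $s_1\neq s_2$, then $(s_1,s_2)\in\Delta$ is a transverse intersection by the first part; in a local chart around $\gamma(s_1)$ the map $(t,u)\mapsto\gamma(t)-\gamma(u)$ has differential $(a,b)\mapsto a\dot\gamma(s_1)-b\dot\gamma(s_2)$ at $(s_1,s_2)$, which is invertible precisely by transversality. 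The inverse function theorem then isolates $(s_1,s_2)$ in $\Delta$, again contradicting accumulation. Hence $\Delta$ is finite, completing the proof.

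The hardest step to articulate cleanly is the reduction from a non-primitive geodesic to its primitive underlying curve; everything else is essentially a packaging of ODE uniqueness and the implicit function theorem.
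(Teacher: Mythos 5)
Your proof is correct and, at its core, follows the same strategy as the paper's: nontangentiality is ruled out via uniqueness for the geodesic initial value problem, and finiteness follows by extracting an accumulation point of parameter pairs in the compact set $\mathbb{S}^1\times\mathbb{S}^1$ and showing that such accumulation is impossible. The differences are mostly in packaging: where the paper works in local coordinates with difference quotients (deriving $\dot\gamma=0$ in the diagonal case and a vanishing determinant, i.e.\ a tangential intersection, in the off-diagonal case), you invoke local injectivity of immersions and the inverse function theorem to isolate transverse intersections --- the same content in slightly more abstract form. The genuine added value of your write-up is the explicit treatment of the equal-velocity case $\dot\gamma(t_1)=\dot\gamma(t_2)$: the paper's one-line appeal to uniqueness yields an outright contradiction only in the opposite-velocity case (your midpoint argument), whereas equal velocities merely force $\gamma$ to be a nontrivial iterate of a primitive closed geodesic, so the statement has to be read as concerning self-intersection points of the image; your reduction to the primitive curve $\tilde\gamma$ handles exactly this degeneracy, which the paper glosses over. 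The only informality there is that for $\tilde\gamma$ the equal-velocity case must be excluded by minimality of the period (you only cite the minus case); making that explicit would close the argument completely.
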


%%%%%%%%%%%%%%%%%%% INTRINSIC PROOF %%%%%%%%%%%%%%%%%%%%%%%%%
\begin{proof}
    % Z. 15
     Since $\Sigma$ is compact, there exists $\varepsilon>0$ such that for all $x\in \Sigma$, we have that
     \begin{align}
         \exp_x \colon B_\varepsilon(0)\subset T_x\Sigma \to B^{d_g}_\varepsilon(x)
     \end{align}
     is a diffeomorphism, where $d_g$ is the Riemannian distance on $\Sigma$ induced by the Riemannian metric $g$. 
     %Consider the set $A\vcentcolon = \{(x,s,t)\in \Sigma\times \mathbb{S}^1\times \mathbb{S}^1 \mid x=\gamma(s)=\gamma(t), s\neq t\}$. 
     Suppose that $(x,s,t)\in \Sigma\times \mathbb{S}^1\times \mathbb{S}^1$ with $x=\gamma(s)=\gamma(t), s\neq t$, and $(x',s',t')\in \Sigma\times \mathbb{S}^1\times \mathbb{S}^1$ is such that $d_g(x,x')$, $|s-s'|$, $|t-t'| <\varepsilon$. 
     By the definition of the exponential map, one checks that $\gamma(s')=\exp_x \left( \gamma'(s)(s'-s)\right)$ and similarly for $\gamma(t')$.
     By the local existence and uniqueness theorem for geodesics, all self-intersections must be non-tangential, so that $\gamma'(s)\neq \gamma'(t)$. We conclude that $x' = \gamma(s')=\gamma(t')$ if and only if $x=x'$, $t=t'$, and $s=s'$. It follows that the set 
     \begin{align}
         A \vcentcolon = \{ (x,s,t)\in \Sigma\times \mathbb{S}^1\times \mathbb{S}^1 \mid x=\gamma(s)=\gamma(t), s\neq t\}
     \end{align}
    consists of isolated points with a uniform lower bound on their distance. As $\Sigma\times \mathbb{S}^1\times \mathbb{S}^1$ is a compact metric space, it follows that $A$ is finite.
     % Suppose that there are infinitely many $t_n\neq s_n\in \mathbb{S}^1$ with $\gamma(t_n)=\gamma(s_n)$. After passing to a subsequence, we find that $t_n\to t, s_n\to s\in \mathbb{S}^1$ as $n\to\infty$, so that $\gamma(t)=\gamma(s)=\vcentcolon x$ by continuity.
     %
     % With respect to local coordinates around $x$, we may write $\gamma$ in terms of its coordinate functions $\gamma^1(\tau), \gamma^2(\tau)$ for $\tau\in \mathbb{S}^1$ sufficiently close to $t$ or $s$. Now, if $t=s$, we find
     % \begin{align}
     %     \dot\gamma^i(s) = \lim_{n\to\infty} \int_0^1 \dot\gamma^i(s_n+\tau(t_n-s_n))\mathrm{d}\tau = \lim_{n\to\infty} \frac{\gamma^i(t_n)-\gamma^i(s_n)}{t_n-s_n}=0 \quad \text{ for }i=1,2.
     % \end{align}
     % Since $\dot\gamma(\tau) = \dot \gamma^i(\tau) \left.\partial_{x^i} \right\vert_{\gamma(\tau)}$, this contradicts the fact that $\gamma$ is immersed. If, on the other hand, $t\neq s$, then $x=\gamma(t)=\gamma(s)$ is a
     % % Q.2
     % \textcolor{red}{self-intersection}. However, in this case $\dot\gamma(t)$ and $\dot\gamma(s)$ are linearly dependent, since
     % \begin{align}
     %     \det \begin{pmatrix}
     %     \dot\gamma^1(t) & \dot\gamma^1(s) \\
     %     \dot\gamma^2(t) & \dot\gamma^2(s) 
     %     \end{pmatrix}
     %     = \lim_{n\to\infty} \det \begin{pmatrix}
     %     \frac{\gamma^1(t_n)-\gamma^1(t)}{t_n-t} & \frac{\gamma^1(s_n)-\gamma^1(s)}{s_n-s} \\
     %     \frac{\gamma^2(t_n)-\gamma^2(t)}{t_n-t} & \frac{\gamma^2(s_n)-\gamma^2(s)}{s_n-s} 
     %     \end{pmatrix} = 0,
     % \end{align}
     % which also yields a contradiction.
\end{proof}

% Z. 16
\begin{lemma}\label{lem:42}
Let $\Sigma$ be a topological sphere and let $\gamma\colon \mathbb{S}^1\to (\Sigma,g)$ be a closed simply covered  geodesic. Then $\Sigma\setminus \gamma(\mathbb
S^1)$ has finitely many connected components $D_i$, $i=1,\dots,m$, each of which is topologically an open disk satisfiying \eqref{eq:adm_boundary}. If $\gamma$ is embedded, then $m=2$ and $\int_{D_i}K\,\mathrm{d}\mu=2\pi$ for $i=1,2$. If $\gamma$ has self-intersections, then $m\geq 3$ and $\int_{D_i} K\,\mathrm{d} \mu < 2\pi$ for all $i=1,\dots,m.$
\end{lemma}
\begin{proof}
    By \Cref{lem:self_int_finite}, $\gamma$ can only have finitely many self-intersections, all of which must be nontangential. Hence, the number of connected components $D_1,\dots, D_m$ of $\Sigma\setminus \gamma(\mathbb{S}^1)$ is finite and each of the $D_i$ is open in $\Sigma$ with $\partial D_i$ given by a single curve $\gamma_i$ in $PS(\mathbb{S}^1;\Sigma)$ and thus satisfies \eqref{eq:adm_boundary}. Moreover, $\gamma_i\in C^0(\mathbb{S}^1;\Sigma)$ is simple closed and consists of geodesic segments of $\gamma$. By the Jordan curve theorem applied to $\gamma_i$, this implies that $D_i$ is topologically an open disk. 

    If $\gamma$ is embedded, then $m=2$ also follows from the Jordan curve theorem applied to $\gamma$, and in this case the Gauss--Bonnet formula yields
    \begin{align}
        \int_{D_i}K\,\mathrm{d}\mu = 2\pi, \qquad i=1,2.
    \end{align}
    
    If $\gamma$ has a self-intersection, then $m\geq 3$.     
    Moreover, the boundary $\partial D_i$ (taken inside $\Sigma$) of each disk $D_i$ contains a vertex which is a self-intersection point of $\gamma$. Indeed, otherwise, $\partial D_{i_0}$ is a submanifold of $\Sigma$ for some $i_0$, and, by the geodesic equation, it is open in $\gamma(\mathbb{S}^1)$. Then $\gamma^{-1}(\partial D_{i_0})$ is open and closed in $\mathbb{S}^1$, so $\gamma(\mathbb{S}^1)\subset D_{i_0}$ has no intersection point, a contradiction.
    % Y. 19
    %Assume the opposite. Then there exists one disk $D_{i_0}$ with $C^1$-boundary. Since however the geodesic equation is of second order,
    %$\partial D_{i_0}$ must already be smooth and thus the image of the whole geodesic. This contradicts the assumption that $\gamma$ is noninjective.
    Fix $1\leq i\leq m$ and let $D\vcentcolon= D_i$. We would now like to apply the Gauss--Bonnet formula \eqref{eq:Gauss_Bonnet} to $D$. However, the boundary $\partial D$ might have points of higher multiplicity, i.e.\
    not be parametrizable by  %a
    a simple closed curve, cf.\ \Cref{fig:D_nonemb_bdry}. Since $\partial D$ consists of parts of $\gamma$, by \Cref{lem:self_int_finite} it may only contain finitely many vertices, say $a_1, \dots, a_N\in \gamma(\mathbb{S}^1)$ with $N\in\N$.     
        Now, for each $1\leq j \leq N$ choose an open neighborhood $B_j$ of $a_j$ which is a topological disk and has embedded smooth boundary, such that $a_j$ is the only point of $\gamma$ in $B_j$ with higher multiplicity and such that $\bar B_j\cap \bar B_{j'}=\emptyset$ for all $j\neq j'$. Moreover, we may assume that $B_j\cap D$ is the disjoint union of finitely many open triangles $T_j^k$ with vertices at $a_j, b_j^k, c_j^k$ and corresponding exterior angles $\alpha_j^k, \beta_j^k, \gamma_j^k$, $1\leq k\leq d_j$, where two of the edges of $T_j^k$ consist of parts of $\gamma$ and the third edge is given by $\partial B_j \cap \partial T_j^k$, see \Cref{fig:geod_tri}.
    \begin{figure}
    \begin{subfigure}[t]{0.48\textwidth}
         \centering
        \def\svgwidth{1\linewidth}
        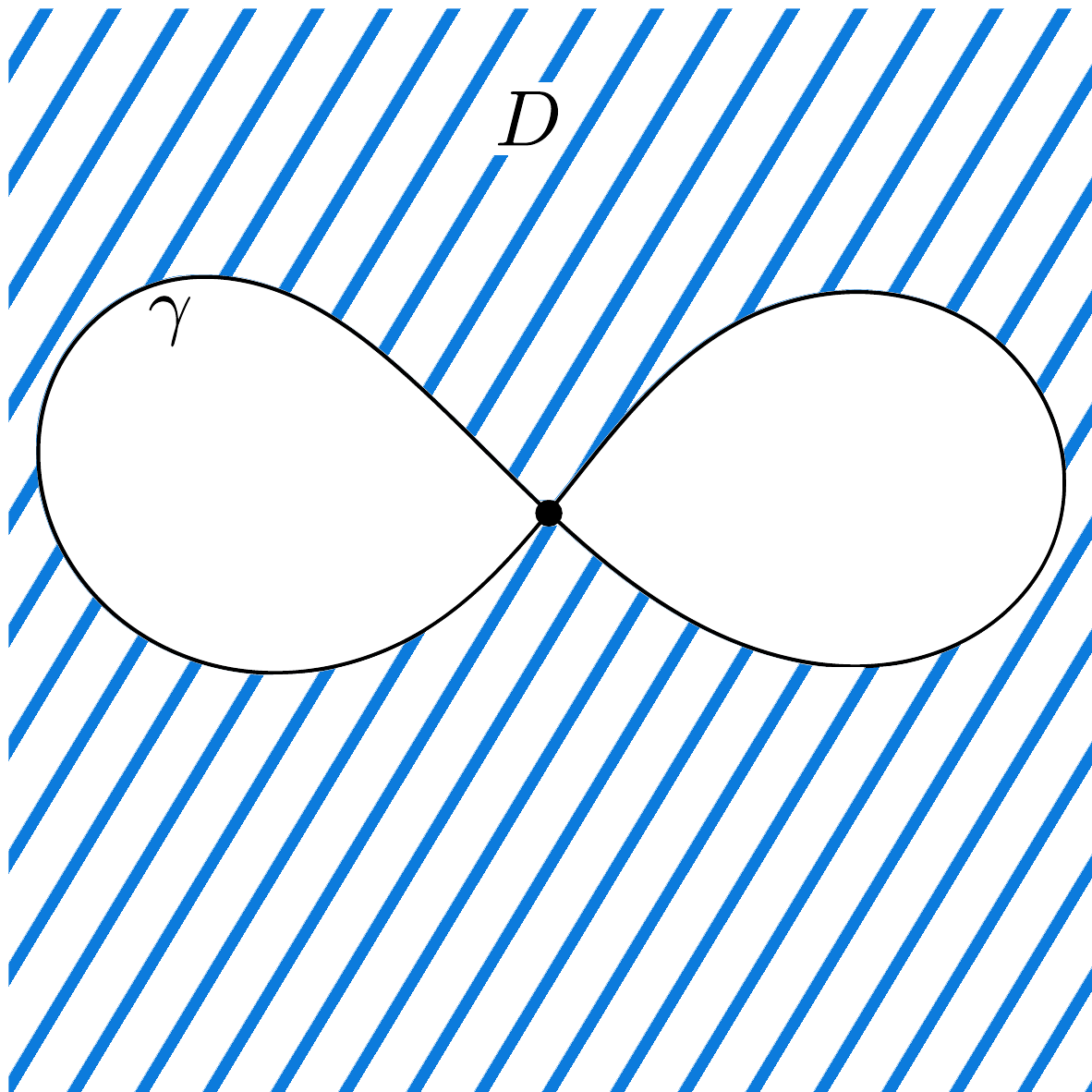
        \caption{A connected component $D$ whose boundary is not embedded.}
        \label{fig:D_nonemb_bdry}
     \end{subfigure}
    \hfill
    \begin{subfigure}[t]{0.48\textwidth}
         \centering
        \def\svgwidth{1\linewidth}
        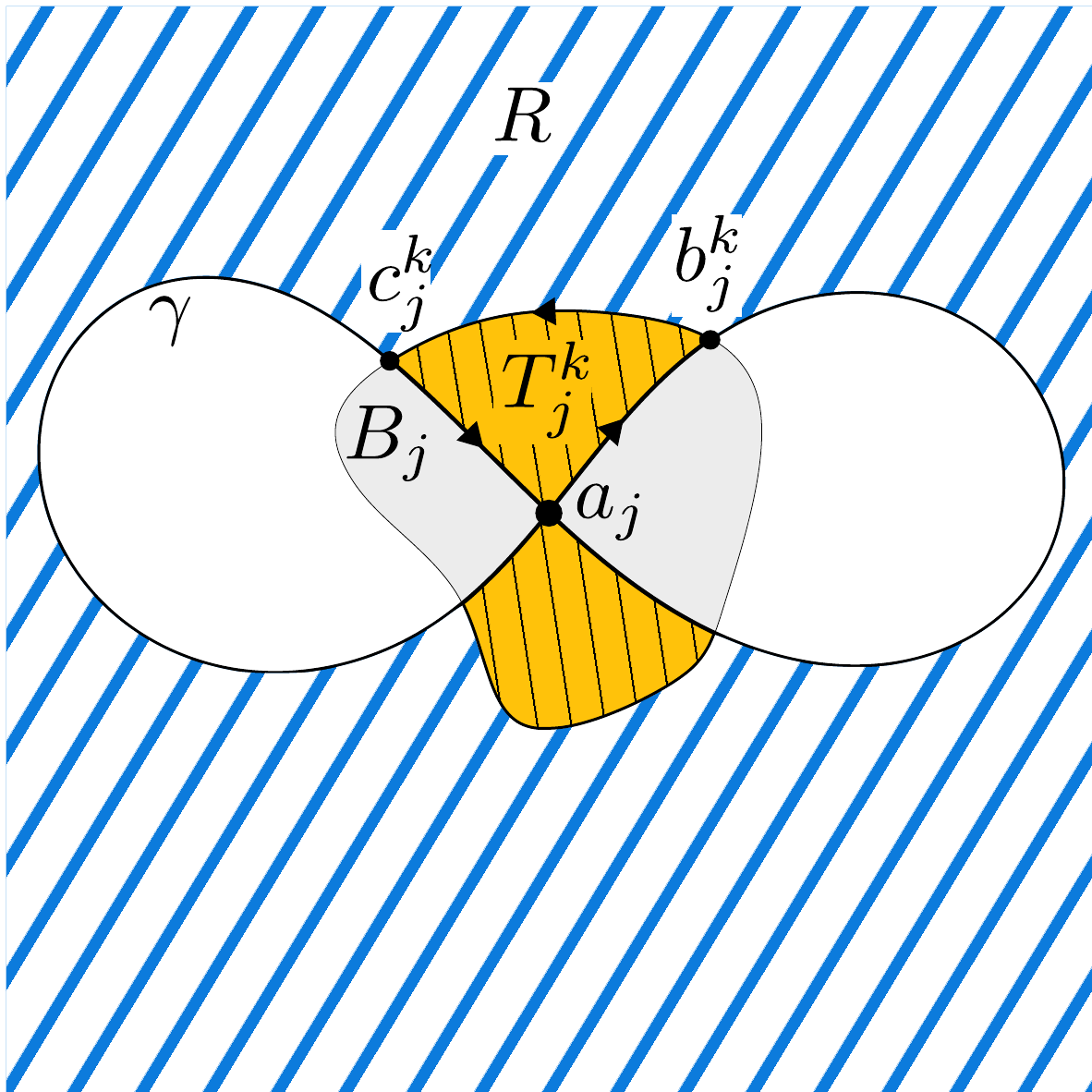
        \caption{Decomposing a neighborhood of $a_j$ into triangles. The set $R\subset \Sigma\cong \mathbb{S}^2$ is a topological disk.} % X.9, Y.20
        \label{fig:geod_tri}
     \end{subfigure}
     \caption{Removing a vertex from a connected component whose boundary has self-intersections.}
     \end{figure}
    For each $1\leq j\leq N$ and $1\leq k \leq d_j$, the Gauss--Bonnet formula yields
    \begin{align}\label{eq:GB_triangle}
        \int_{T_j^k} K \,\mathrm{d}\mu + \alpha_j^k +\beta_j^k+\gamma_j^k + \int_{\partial B_j \cap \partial T_j^k} \kappa \,\mathrm{d}s = 2\pi.
    \end{align}
    By the above cutting procedure, the set $R\vcentcolon= D\setminus (\bar B_1\cup \dots \cup \bar B_N)$ has
    % (piecewise smooth) embedded 
    % boundary. Since $\Sigma$ is of spherical type, $R$ is a topological disk. 
    % X.9, Y.20
    (piecewise smooth) embedded and connected boundary. It follows that topologically $R$ is still an open disk, cf.\ \Cref{fig:geod_tri}. Indeed, $D$ retracts to $R$ which implies that $R$ is connected and simply connected, thus has Euler-characteristic one.    
    In particular, by construction $R$ satisfies the prerequisites of the Gauss--Bonnet formula \eqref{eq:Gauss_Bonnet}. Moreover, each of the vertices $b_j^k$, $c_j^k$ is also a vertex of $R$ with exterior angle $\hat\beta_j^k, \hat\gamma_j^k$, respectively, and these are the only vertices of $R$.
    The angles satisfy the relation
    \begin{align}\label{eq:angle_relation}
     \pi = \beta_j^k+\hat{\beta}_j^k = \gamma_j^k+\hat{\gamma}_j^k\quad  \text{ for } 1\leq k\leq d_j, 1\leq j\leq N.  
    \end{align}
    The Gauss--Bonnet formula for $R$ yields
    \begin{align}\label{eq:GB_R}
        \int_R K \,\mathrm{d}\mu  + \sum_{j=1}^N \sum_{k=1}^{d_j} (\hat{\beta}_j^k+\hat{\gamma}_j^k) + \sum_{j=1}^N \sum_{k=1}^{d_j}\int_{\partial B_j \cap \partial T^k_j} (-\kappa) \,\mathrm{d}s =  2\pi.
    \end{align}
    Note that the sign of the geodesic curvature integrals in \eqref{eq:GB_R} are opposite to those in \eqref{eq:GB_triangle}. Thus, summing up \eqref{eq:GB_triangle} and \eqref{eq:GB_R}, we find
    \begin{align} 
        \int_D K \,\mathrm{d}\mu &= \int_R K \,\mathrm{d}\mu + \sum_{j=1}^N\sum_{k=1}^{d_j} \int_{T_j^k}K \,\mathrm{d}\mu \\
        &= 2\pi - \sum_{j=1}^N\sum_{k=1}^{d_j} \left(\alpha^k_j+\beta^k_j+\gamma^k_j+\hat{\beta}^k_j + \hat{\gamma}_j^k -2\pi\right)\\
        & = 2\pi - \sum_{j=1}^N\sum_{k=1}^{d_j} \alpha^k_j,\label{eq:GB_D}
    \end{align}
    where we used \eqref{eq:angle_relation} in the last step. 
    The result now follows if we can prove that $\alpha_j^k$, the exterior angle of $T_j^k$ at $a_j$, satisfies $\alpha_j^k>0$ for all $1\leq k\leq d_j$, and $1\leq j\leq N$.    
    This follows by applying \Cref{lem:nagativeAngle} below to $U \vcentcolon =T_j^k$ and $p \vcentcolon = a_j$ and noting that $\alpha_j^k=0$ is impossible by \Cref{lem:self_int_finite}.
\end{proof}

\begin{remark}\label{rem:GB_with_selfint}
    For geodesic polygons with self-intersections at their boundaries,
    %nonembedded boundary
    equation \eqref{eq:GB_D} can be viewed as a version of the Gauss--Bonnet theorem with an extended notion of exterior angles.
\end{remark}

\begin{remark}\label{rem:42_angle_control}
    % Q.3
    Suppose that there exists
    % the angles of $\gamma$ at its self-intersections are bounded from below uniformly, i.e.\  there exists 
    $\xi\in (0,\pi/2)$ such that
	\begin{align}\label{eq:angle_control}
		\arccos\frac{g(\dot\gamma(t), \dot\gamma(s))}{|\dot\gamma(t)|_{g} |\dot\gamma(s)|_{g}} \in (\xi, \pi-\xi)\quad \forall t\neq s \in \mathbb{S}^1 \text{ with }\gamma(t)=\gamma(s).
        %\text{for all }t\neq s \in \mathbb{S}^1 \text{ with }\gamma(t)=\gamma(s).
	\end{align}
    Using this and $d_j\geq 1$ to estimate \eqref{eq:GB_D}, the estimate in \Cref{lem:42} can be improved to
	\begin{align}
			\int_{D_i}K\,\mathrm{d}\mu \leq 2\pi - N_i \xi,
	\end{align}
	where $N_i\in \N$ is the number of self-intersections of $\gamma$ which are contained in $\partial D_i$.
\end{remark}

\begin{lemma}\label{lem:nagativeAngle}
    Let $\gamma \colon \mathbb{S}^1 \rightarrow (\Sigma,g)$ be simply covered. Let $a_1<b_1<a_2<b_2\in [0,2\pi)$ and let $c_1 = \gamma\vert_{[a_1,b_1]}$, $c_2=\gamma\vert_{[a_2,b_2]}$ be embedded subsegments with $p \vcentcolon = \gamma(b_1)=\gamma(a_2)$ and $\gamma(x) \neq \gamma(y)$ for any $(x,y) \in [a_1,b_1) \times (a_2,b_2]$. Let $U \subset \Sigma$ be such that 
    \begin{enumerate}[label=(\roman*)]
        \item\label{item:L34_a} $U$ is an open triangle with embedded, piecewise smooth boundary $\partial U$;
        \item\label{item:L34_b} $\overline{U} \cap \gamma(\mathbb{S}^1) = \partial U \cap \gamma(\mathbb{S}^1) = c_1([a_1,b_1]) \cup c_2([a_2,b_2])$.
    \end{enumerate}
    Then the exterior angle  $\theta$ of $U$ at $p$ satisfies $\theta \geq 0$.
\end{lemma}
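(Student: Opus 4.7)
The plan is to analyze the local picture near the vertex $p$ using the transversality of $c_1$ and $c_2$. Set $v_1 := \dot\gamma(b_1)$ and $v_2 := \dot\gamma(a_2)$; since $b_1$ and $a_2$ produce a true self-intersection of $\gamma$, \Cref{lem:self_int_finite} gives that $v_1$ and $v_2$ are linearly independent. In a chart centered at $p$, the arc $c_1$ approaches $p$ from the $-v_1$ direction and $c_2$ leaves $p$ in the $+v_2$ direction, so the two half-rays along $-v_1$ and $+v_2$ are exactly the local tangent rays of $\partial U$ at $p$.

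Next I would consider the geodesic extensions $c_1^+ := \gamma\vert_{[b_1,\, b_1+\varepsilon]}$ and $c_2^- := \gamma\vert_{[a_2-\varepsilon,\, a_2]}$, which leave $p$ in the opposite directions $+v_1$ and $-v_2$, respectively. Because $\gamma$ is a smooth immersion, for small enough $\varepsilon$ the concatenations $c_1 \cup c_1^+$ and $c_2^- \cup c_2$ are embedded near $p$; combined with the hypothesis $\gamma(x) \neq \gamma(y)$ for $(x,y) \in [a_1,b_1) \times (a_2,b_2]$, this yields $\delta > 0$ such that $(c_1^+\cup c_2^-) \cap B_\delta(p)$ meets $c_1 \cup c_2$ only at $p$. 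Consequently $(c_1^+\cup c_2^-)\setminus\{p\}$ lies in $\gamma(\mathbb{S}^1)\setminus(c_1\cup c_2)$, and by hypothesis \ref{item:L34_b} is disjoint from $\overline{U}$ in a neighborhood of $p$.

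The two half-rays $-v_1$ and $+v_2$ split $B_\delta(p)$ into two open sectors. A short bookkeeping argument using the linear independence of $v_1, v_2$ shows that exactly one of them, call it $S_+$, contains the half-rays $+v_1$ and $-v_2$, and that $S_+$ has opening angle strictly greater than $\pi$ while the complementary sector $S_-$ has opening angle in $(0,\pi)$. Since $c_1^+$ and $c_2^-$ emerge from $p$ along $+v_1$ and $-v_2$ and are disjoint from $\overline{U}$ near $p$, the region $U$ cannot locally equal $S_+$, so it agrees with $S_-$ near $p$. Therefore the interior angle of $U$ at $p$ equals the opening angle of $S_-$, which lies in $(0,\pi)$; in the positive orientation of $\partial U$ (with $U$ on the left) the identity $\theta = \pi - (\text{interior angle})$ then gives $\theta \in (0,\pi)$, and in particular $\theta \geq 0$.

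The main subtlety is orientation bookkeeping: $\theta$ is a signed quantity, and the lemma (consistent with the application of Gauss--Bonnet in the proof of \Cref{lem:42}) uses the positively oriented parametrization of $\partial U$. Under this convention the identity $\theta = \pi - (\text{interior angle})$ at a vertex is standard, so the substantive content of the argument reduces to correctly identifying the local sector occupied by $U$, which is exactly where hypothesis \ref{item:L34_b} is used to rule out the reflex sector $S_+$.
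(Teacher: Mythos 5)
Your argument is sound in the transversal case, but it does not prove the lemma as stated, and the gap enters at the very first step: you invoke \Cref{lem:self_int_finite} to conclude that $v_1=\dot\gamma(b_1)$ and $v_2=\dot\gamma(a_2)$ are linearly independent. That lemma is about closed \emph{geodesics}, whereas the present statement is for an arbitrary closed curve; no transversality at $p$ is assumed. This is not a cosmetic point: the conclusion is deliberately $\theta\geq 0$ rather than $\theta>0$ precisely because tangential contact at $p$ is allowed here, and strict positivity is recovered only later, in the proof of \Cref{lem:42}, by a separate appeal to \Cref{lem:self_int_finite}. Once $v_1$ and $v_2$ may be parallel, several of your steps collapse: the rays $-v_1$ and $+v_2$ no longer split $B_\delta(p)$ into a sector of angle greater than $\pi$ and one of angle less than $\pi$; the claim that $(c_1^+\cup c_2^-)\cap B_\delta(p)$ meets $c_1\cup c_2$ only at $p$ is no longer justified, since the hypothesis $\gamma(x)\neq\gamma(y)$ only covers $(x,y)\in[a_1,b_1)\times(a_2,b_2]$ and says nothing about the extended parameters (indeed, $\gamma$ could retrace the beginning of $c_2$ immediately after $b_1$ when the tangents coincide, so $c_1^+$ need not avoid $\overline{U}$); and the final dichotomy ``$U$ agrees with $S_+$ or $S_-$'' loses its meaning. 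So as written the proposal establishes a weaker statement (transversal vertex, $\theta>0$), which would happen to suffice for the application in \Cref{lem:42}, but not the lemma itself.

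For comparison, the paper's proof needs neither transversality nor the second continuation $c_2^-$: it argues by contradiction from $\theta<0$, blows up $U$ at the vertex to place an open cone $C$ of opening angle $\omega>\pi$ inside $U$ near $p$, and observes that $\gamma$, which avoids $U$ by hypothesis \ref{item:L34_b}, is therefore trapped near $t=b_1$ in the complementary closed cone $Z=\R^2\setminus C$ of opening angle less than $\pi$, which after a rotation lies in $\mathbb{H}\cup\{0\}$. Smoothness of $\gamma$ through $b_1$ then forces $\dot\gamma^2(b_1)=0$, and a Taylor expansion together with the homothety invariance and closedness of $Z$ forces the nonzero horizontal vector $\dot\gamma^1(b_1)e_1$ to lie in $Z$, a contradiction. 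If you want to salvage your two-sided sector argument, you would have to either add transversality as a hypothesis (and then prove the tangential cases of the lemma separately), or replace the sector dichotomy by an argument, like the paper's, that only uses the immersedness of $\gamma$ at $b_1$ and the fact that $\gamma(\mathbb{S}^1)\cap U=\emptyset$.
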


%%%% Intrinsic proof %%%%%

\begin{proof}
    Since the statement is local, after composing with a normal coordinate chart around the point $p$, we may assume that $\gamma$ is a planar curve, i.e.\ $U\subset \Sigma=\R^2$, $g=\langle\cdot,\cdot\rangle$, and $p=(0,0)\in\R^2$.
     Let $\theta$ be the exterior angle at $p=(0,0)$ as in the statement and assume, for the sake of contradiction, that $\theta <0$. Then the interior angle of $U$ is given by $\pi - \theta\in (\pi, 2\pi)$.

     First, we observe that there exists an open cone $C \subset \mathbb{R}^2$ of opening angle $\omega> \pi$ and some $\varepsilon>0$ such that
     \begin{align}\label{eq:contains_cone}
     C \cap B_\varepsilon(0) \subset U \cap B_\varepsilon(0).
     \end{align}
     This is readily checked using that the blow-up of the triangle $U$ around the vertex $p=(0,0)$ is a cone whose opening angle equals the interior angle $\pi-\theta > \pi$. From there it is easy to obtain an open cone $C$ with opening angle $\omega \in (\pi, \pi- \theta)$ which satisfies \eqref{eq:contains_cone}, see \Cref{fig:angle_neg}.

\begin{figure}
	\centering
	\def\svgwidth{0.25\linewidth}
	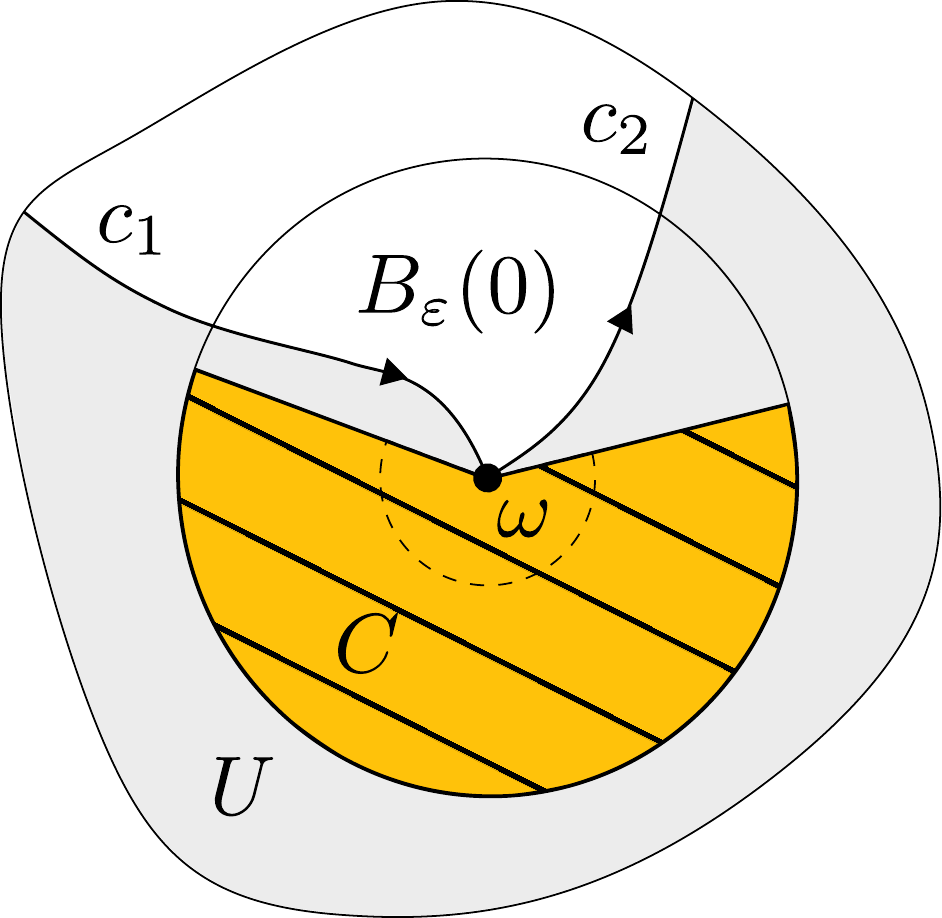
	\caption{Construction of the cone $C$ with opening angle $\omega\in (\pi, 2\pi)$.}
\label{fig:angle_neg}
\end{figure}  
By continuity, we have $\gamma(t)\in B_\varepsilon(0)$ for $t\in (b_1-\delta, b_1+\delta)$ for $\delta>0$ small enough. By assumption \ref{item:L34_b}, we have $\gamma(\mathbb{S}^1) \cap U = \emptyset$, and hence \eqref{eq:contains_cone} implies
    $\gamma(t)\in B_\varepsilon(0)\setminus C \text{ for all }t \in (b_1-\delta,b_1 + \delta).$

Now, $Z \vcentcolon= \R^2\setminus C$ is a closed cone with opening angle $2\pi-\omega<\pi$ and  $\gamma\vert_{(b_1-\delta,b_1+\delta)}$ is an immersion with $\gamma(t)\in Z$ for all $t\in (b_1-\delta,b_1+\delta)$.
After rotation, we may assume that $Z$ lies entirely in $\mathbb{H} \cup \{0 \}$, where $\mathbb{H} = \{ (x,y) \in \mathbb{R}^2 \mid y> 0\}$ is the open upper half plane. Write $\gamma = (\gamma^1,\gamma^2)$. As %$\gamma_2$
% Z. 20
$\gamma^2$
becomes minimal at $t=b_1$, we have $\dot\gamma^1(b_1) = \lambda$ for some $\lambda \neq 0$, since $\gamma$ is an immersion. Taylor's expansion yields $\gamma(t) = \gamma(b_1) + (t-b_1)\lambda e_1  + r(t)(t-b_1)$ with $\lim_{t\to b_1}r(t)=0$. We thus find that $(t-b_1)\lambda e_1 + r(t)(t-b_1) \in Z$. As $Z$ is homothety-invariant and closed, %taking
we may take any $t> b_1$, divide by $t-b_1$ and pass to the limit $t\to b_1$  %we
to conclude $\lambda e_1 \in Z$, a contradiction.
 \end{proof}

\section{Proof of the main result}

The preceding purely intrinsic discussion now allows us to give a lower bound on the Willmore energy on the complement of each connected component of $\Sigma\setminus \gamma(\mathbb{S}^1)$ if $\Sigma=\mathbb{S}^2$ and $\gamma$ is a closed geodesic.

\begin{lemma}\label{lem:W_in_S_Di}
     Let $f\colon \mathbb{S}^2\to\R^n$ be an immersion. Let $\gamma\colon \mathbb{S}^1 \rightarrow (\mathbb{S}^2,g_f)$ be a simply covered closed geodesic
      and let $D_1,...,D_m$ be the %finitely many Z. 22
      connected components of $\mathbb{S}^2 \setminus \gamma ( \mathbb{S}^1)$ as in \Cref{lem:42}. Then for all $i=1,\dots,m$ the following holds.
      \begin{enumerate}[label=(\roman*)]
      	\item If $\gamma$ is embedded, then $\mathcal{W}(f,\mathbb{S}^2\setminus D_i)\geq 2\pi$;
 	    \item If $\gamma$ has self-intersections, then $\mathcal{W}(f, \mathbb{S}^2\setminus D_i)> 2\pi$;
 	    \item If $\gamma$ has self-intersections with angles at the self-intersections bounded from below by $\xi>0$ in the sense of \eqref{eq:angle_control}, then we have $\mathcal{W}(f,\mathbb{S}^2\setminus D_i)\geq 2\pi+ N_i\xi$, where $N_i\in \N$ is the number of self-intersections of $\gamma$ contained in $\partial D_i$. 
      \end{enumerate}
 \end{lemma}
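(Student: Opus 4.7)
The plan is to combine three ingredients: the pointwise inequality $|H_f|^2 \geq 4 K_f$, the Gauss--Bonnet theorem on $\mathbb{S}^2$, and the upper bound on $\int_{D_i} K_f \, d\mu$ coming from \Cref{lem:42} and \Cref{rem:42_angle_control}. The rough idea is that since the whole sphere carries total curvature $4\pi$ by Gauss--Bonnet, whatever is \emph{not} absorbed by $D_i$ must reappear in the complement, and this total curvature in the complement is in turn dominated by its Willmore energy.

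First I would record the pointwise inequality $|H_f|^2 \geq 4 K_f$, valid for any immersion $f\colon \Sigma \to \R^n$ in arbitrary codimension. This follows from the Gauss equation together with the elementary fact that $(\operatorname{tr} M)^2 \geq 4 \det M$ for every symmetric $2 \times 2$ matrix $M$, applied to each scalar second fundamental form in an orthonormal normal frame. Integrating over an open set $D \subset \mathbb{S}^2$ whose boundary has zero $\mu$-measure therefore yields $\mathcal{W}(f,D) \geq \int_D K_f \, d\mu$. Next, Gauss--Bonnet applied to $\mathbb{S}^2$ gives $\int_{\mathbb{S}^2} K_f \, d\mu = 4\pi$, and since $\gamma(\mathbb{S}^1)$ is $\mu$-null by \Cref{lem:self_int_finite} one has
\begin{equation*}
    \int_{\mathbb{S}^2 \setminus D_i} K_f \, d\mu = 4\pi - \int_{D_i} K_f \, d\mu \qquad (i = 1,\dots,m).
\end{equation*}

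With these two displays in place, the three claims follow by substituting the appropriate upper bound on $\int_{D_i} K_f \, d\mu$. For (i), if $\gamma$ is injective then $\gamma(\mathbb{S}^1)$ is a simple smooth closed curve, so $m=2$ and each $D_i$ is an open topological disk whose boundary is precisely $\gamma(\mathbb{S}^1)$; applying \eqref{eq:Gauss_Bonnet} to $D_i$ (no vertices, vanishing geodesic curvature) gives $\int_{D_i} K_f \, d\mu = 2\pi$, hence $\mathcal{W}(f,\mathbb{S}^2 \setminus D_i) \geq 2\pi$. For (ii), \Cref{lem:42} provides the strict bound $\int_{D_i} K_f \, d\mu < 2\pi$, which produces the strict inequality $\mathcal{W}(f,\mathbb{S}^2\setminus D_i) > 2\pi$. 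For (iii), \Cref{rem:42_angle_control} refines the bound to $\int_{D_i} K_f \, d\mu \leq 2\pi - N_i \xi$, and the same scheme delivers $\mathcal{W}(f,\mathbb{S}^2\setminus D_i) \geq 2\pi + N_i \xi$.

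No genuine obstacle arises since every ingredient has already been set up in the preceding section. Two small points worth flagging: verifying $|H|^2 \geq 4 K$ in arbitrary codimension (which reduces to the codimension-one case via the Gauss equation in any orthonormal normal frame), and handling (i) separately because \Cref{lem:42} assumes noninjectivity and returns only a strict inequality in that regime, whereas in the injective case Gauss--Bonnet gives equality $\int_{D_i}K_f\,d\mu=2\pi$ directly.
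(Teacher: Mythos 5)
Your proposal is correct and follows essentially the same route as the paper: the pointwise bound $K\leq\frac14|H|^2$, the global Gauss--Bonnet identity $\int_{\mathbb{S}^2\setminus D_i}K\,\mathrm{d}\mu=4\pi-\int_{D_i}K\,\mathrm{d}\mu$, Gauss--Bonnet on the disk $D_i$ in the injective case, and the bounds of \Cref{lem:42} and \Cref{rem:42_angle_control} in the noninjective cases. The only difference is that you spell out the arbitrary-codimension justification of $|H|^2\geq 4K$, which the paper leaves as an elementary estimate.
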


 \begin{proof}
    % Z. 23
    The pointwise estimate $K \leq \frac{1}{4} |H|^2$ and the global Gauss--Bonnet theorem imply 
 	\begin{align}\label{eq:intKS_Di}
 		\mathcal{W}(f,\mathbb{S}^2\setminus D_i)\geq \int_{\mathbb{S}^2\setminus D_i}K\,\mathrm{d}\mu= 4\pi -\int_{D_i}K\,\mathrm{d}\mu,
 	\end{align}
    as $\chi(\mathbb{S}^2)=2$.
    By \Cref{lem:42}, we conclude $\mathcal{W}(f,D_i)\geq 2\pi$ with strict inequality if $\gamma$ has self-intersections, thus (i) and (ii) follow. The last statement follows from estimating $\int_{D_i}K\,\mathrm{d}\mu$ by the upper bound in \Cref{rem:42_angle_control}.
 	% If $\gamma$ is injective, it is a simple closed curve and $m=2$ follows from the Jordan curve theorem. Note that in this case $\partial D_i$ contains no vertices and is parametrized  by the simple closed curve $\gamma$. We may thus   directly apply the Gauss--Bonnet formula to $D_i$, finding that
 	% \begin{align}\label{eq:intKDi}
 	% 	\int_{D_i}K\,\mathrm{d}\mu = 2\pi,
 	% \end{align}
 	% since the geodesic curvature of $\gamma$ is zero and since $D_i$ is a topological disk.
 	% Moreover, the global Gauss--Bonnet theorem implies
 	% \begin{align}\label{eq:intKS_Di}
 	% 	\int_{\mathbb{S}^2\setminus D_i}K\,\mathrm{d}\mu= 4\pi -\int_{D_i}K\,\mathrm{d}\mu,
 	% \end{align}
 	% since $\chi(\mathbb{S}^2)=2$. 
 	% The claim then follows from \eqref{eq:intKDi} and \eqref{eq:intKS_Di} combined with the elementary estimate $K \leq \frac{1}{4} |H|^2$.
 	%
 	% The remaining cases are similar, but we replace \eqref{eq:intKDi} by the upper bound for $\int_{D_i}K\,\mathrm{d}\mu$ in \Cref{lem:42} for case (ii) and by the upper bound in \Cref{rem:42_angle_control} for case (iii), respectively.
 \end{proof}

With this tool, we can finally prove our main result.

\begin{proof}[Proof of Theorem \ref{thm:main}]
 % Z. 26
Let $f\colon \mathbb{S}^2\to\R^n$ be an immersion with $\mathcal{W}(f)<6\pi$ and let $\gamma\colon \mathbb{S}^1\to (\mathbb{S}^2, g)$ be a geodesic of length $L\vcentcolon=\mathcal L_g(\gamma)$ where $g= g_f$. We may assume that $\gamma$ is simply covered.
Let $D_1, \dots, D_m$ be the %finitely many Z. 24
connected components of $\mathbb{S}^2\setminus\gamma(\mathbb{S}^1)$ as in \Cref{lem:42}. In particular, each $D_{{i}}$ is a topological disk satisfying \eqref{eq:adm_boundary}.
After rescaling, we may assume $\mathcal{A}(f)=1$.
%Let $M\vcentcolon= \max_{{\red{i}}=1,\dots,m} \mathcal{A}(f,D_{\red{i}})$.
%Z.27
%By the handshaking lemma (i.e.\
% Z. 25
Since counting the boundaries of the domains counts each edge twice, we have 
 \begin{equation}\label{eq:handshaking}
     \sum_{{{i}} = 1}^{m} \mathcal{L}_{g}(\partial D_{{i}})  = 2\mathcal{L}_{g}(\gamma).
 \end{equation}
Let $M\vcentcolon= \max_{{{i}}=1,\dots,m} \mathcal{A}(f,D_{{i}})$. Since $\mathbb R^3$ is isometrically embedded in $\mathbb R^4$, we may assume $n\ge4$ and apply \Cref{lem:22_new}\ref{item:22_new_4} as well as Equation \eqref{eq:handshaking} to deduce
\begin{align}
    1&=\sum_{i=1}^m\mathcal A(f,D_i)\le 2C_n^2\sum_{i=1}^m\Biggl(\Bigl(\int_{D_i}|H|\,\mathrm d\mu\Bigr)^2+4\mathcal L_g(\partial D_i)^2\Biggr)\\
    &\le 2C_n^2\sum_{i=1}^m\Bigl(4\mathcal W(f,D_i)\mathcal A(f,D_i) + 4L\mathcal L_g(\partial D_i)\Bigr) \le 8C_n^2(6\pi M + 2L^2).
\end{align}
Thus, assuming
\begin{equation}\label{eq:length_assumption}
    L^2 < 1/c_n\qquad\text{for $c_n\vcentcolon=16C_n^2$},
\end{equation}
we infer
\begin{equation}
    M \ge\frac{1-c_nL^2}{d_n}>0\qquad \text{for $d_n\vcentcolon=48\pi C_n^2$.}
\end{equation}
Let ${i_0}\in \{1, \dots,m\}$ be such that $\mathcal{A}(f,D_{{i_0}})=M$.
We apply Lemma \ref{lem:22_new}\ref{item:22_new_1}\ref{item:22_new_2} to obtain $x_0\in D_{i_0}$ with
\begin{align}
    &\dist(f(x_0),f(\partial D_{i_0}))\ge \frac12\Bigl(\diam(f(D_{i_0})) - \frac{\mathcal L_g(\partial D_{i_0})}2\Bigr)\\
    &\qquad\ge\frac{1}2\Bigl(\frac{2M}{\mathcal L_g(\partial D_{i_0}) + 2\sqrt{\mathcal W(f,D_{i_0})\mathcal A(f,D_{i_0})}} - \frac{\mathcal L_g(\partial D_{i_0})}{2}\Bigr)\\
    &\qquad\ge \frac{1-c_nL^2}{d_n(L + 2\sqrt{6\pi})} - \frac{L}{4}=\vcentcolon a(L). \label{eq:dist_to_boundary}
\end{align}
Let $\bar{L}>0$ be such that $a(\bar{L})>0$. If $L<\bar{L}$, then, since $a$ is monotonically decreasing, we have that $a(L)\ge a(\bar{L})=\vcentcolon\alpha$, and Lemma \ref{lem:22_new}\ref{item:22_new_3} combined with \eqref{eq:dist_to_boundary} implies
 \begin{equation}
     4 \pi \leq \mathcal{W}(f, {D_{{i_0}}}) + 2 \int_{\partial D_{{i_0}}} \abs{f(x)-f(x_0)}^{-1}  \; \mathrm{d} s(x) \leq \mathcal{W}(f,{D_{{i_0}}}) +  \frac{2}{\alpha}L.
 \end{equation}
 Applying \Cref{lem:W_in_S_Di} we find that $\mathcal{W}(f,{\mathbb{S}^2\setminus D_{{i_0}}})\geq 2\pi$ and consequently
\begin{align}\label{eq:6pi_cotradiction}
    L\ge \frac{\alpha}{2}(6\pi-\mathcal W(f)).
\end{align}
If $L\ge \bar{L}$, then $\mathcal W(f)\ge 4\pi$ implies
\begin{align}
    L\ge \bar{L}~\frac{6\pi-4\pi }{2\pi} \geq \frac{\bar{L}}{2\pi}(6\pi-\mathcal{W}(f)).
\end{align}
The conclusion follows
if there exists a positive solution $L_0$ of the equation $\frac{a({L_0})}{2} = \frac{{L_0}}{2\pi}$ satisfying $L_0^2<1/c_n$, cf.\ \eqref{eq:length_assumption}.
In this case, we may take $\bar{L}\vcentcolon = L_0$ above and thus \Cref{thm:main} follows with $C(n)\vcentcolon = \frac{a(\bar{L})}{2} = \frac{\bar{L}}{2\pi}>0$.\newline\indent 
The condition $\frac{a(L_0)}{2} = \frac{L_0}{2\pi}$ is equivalent to $L_0$ being the unique positive solution of the quadratic equation
\begin{equation}
    {L_0}^2 + 2L_0\alpha_1 = \frac{1}{\alpha_2}
\end{equation}
where 
\begin{equation}
    \alpha_2\vcentcolon=d_n\frac{4+\pi}{4\pi} + c_n=4(16+3\pi)C_n^2,\quad \alpha_1\vcentcolon=\frac{\sqrt{6\pi}d_n\frac{4+\pi}{4\pi}}{\alpha_2} = \frac{3\sqrt{6\pi}(4+\pi)}{16+3\pi}.
\end{equation}
Thus,
\begin{equation}
    L_0 = -\alpha_1 + \sqrt{\alpha_1^2 + \frac{1}{\alpha_2}}
\end{equation}
and one readily verifies \eqref{eq:length_assumption}.
\end{proof}

\begin{remark}\label{rem:opt_C}
    The above proof shows that in \Cref{thm:main} we may take
    \begin{align}
        C(n) = \frac{\sqrt{216\pi(4+\pi)^2 + \frac{16+3\pi}{C_n^2}} -6(4+\pi)\sqrt{6\pi} }{4\pi(16+3\pi)},
    \end{align}
    where
    % \begin{align}
    %     C(n)= \frac{1}{2\pi} \Bigl(\sqrt{\alpha_1^2 + \frac{1}{\alpha_2}} -\alpha_1 \Bigr),
    % \end{align}
    % where 
    % \begin{align}
    %      \alpha_2= 4(16+3\pi)C_n^2, \qquad \alpha_1=\frac{3\sqrt{6\pi}(4+\pi)}{16+3\pi},
    % \end{align} and
    $C_n = \min\{\sqrt{\frac{n-2}{32\pi}},\frac{75}{\sqrt{\pi}}\}$ is as in \Cref{lem:22_new}\ref{item:22_new_4} with $C_3 \vcentcolon = C_4$. Note in particular that $C(n)$ is bounded away from zero uniformly in $n$. \newline \indent
    By explicit computation, we see that $C(n)< C$ for all $n\geq 3$ where $C>0$ is as in \Cref{thm:main_inj}, i.e.\ the explicit constant in the embedded case is better.
\end{remark}

In the case of geodesics with self-intersections and bounded intersection angle, the above proof yields the following improvement of \Cref{thm:main}.
\begin{cor}
 Let $f\colon \mathbb S^2\to\R^n$ be an immersion, $\gamma\colon \mathbb S^1\to(\mathbb S^2,g_f)$ be a closed geodesic with self-intersections, and $\xi\in (0,\pi/2)$. Assume that $\gamma$ intersects itself in angles bounded by $\xi$ in the sense of (3.5). Then
$$ \mathcal{L}_{g_f}(\gamma)\geq C(n)(6\pi+\xi-\mathcal{W}(f)) \sqrt{\mathcal{A}(f)}.$$    
\end{cor}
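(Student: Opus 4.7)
The plan is to follow the proof of \Cref{thm:main} essentially verbatim, making a single substitution at the step where the Willmore energy on the complement of the largest tile is bounded from below. Concretely, after rescaling so that $\mathcal{A}(f)=1$, \Cref{lem:42} provides the tiling $D_1,\dots,D_m$ of $\mathbb{S}^2\setminus\gamma(\mathbb{S}^1)$ into open topological disks satisfying \eqref{eq:adm_boundary}, and the argument in its proof shows that each boundary $\partial D_i$ contains at least one vertex which is a self-intersection of $\gamma$. In particular, the intersection counts $N_i$ appearing in \Cref{rem:42_angle_control} and \Cref{lem:W_in_S_Di}(iii) satisfy $N_i\geq 1$ for every $i\in\{1,\dots,m\}$, which is the only new input required.

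I would then reproduce the chain of estimates from the proof of \Cref{thm:main} without any modification: the handshaking identity for $\sum_k\mathcal{L}_g(\partial D_k)$, Simon's diameter bound, the Gauss--Bonnet identity $\int_{\mathbb{S}^2}|A|^2\,\mathrm{d}\mu=4\mathcal{W}(f)-8\pi\leq 16\pi$, and \Cref{lem:22} applied tile-by-tile all yield, with the same constants $\alpha,L,\alpha'>0$ depending only on $n$, a tile $D_{k_0}$ of area $M\geq 1/C^2$ and diameter $\diam f(D_{k_0})\geq\alpha$ in the regime $\mathcal{L}_g(\gamma)\leq\alpha'$. Applying \Cref{lem:23} and \Cref{lem:24} at an interior point $x_0\in D_{k_0}$ then produces
\begin{equation}
4\pi\leq\mathcal{W}(f,D_{k_0})+\frac{8\mathcal{L}_g(\gamma)}{\alpha},
\end{equation}
exactly as in the original argument.

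The decisive change occurs here: instead of \Cref{lem:W_in_S_Di}(ii), I would invoke \Cref{lem:W_in_S_Di}(iii) together with $N_{k_0}\geq 1$ to obtain $\mathcal{W}(f,\mathbb{S}^2\setminus D_{k_0})\geq 2\pi+N_{k_0}\xi\geq 2\pi+\xi$. Adding this to the previous display yields
\begin{equation}
6\pi+\xi\leq\mathcal{W}(f)+\frac{8\mathcal{L}_g(\gamma)}{\alpha},
\end{equation}
which rearranges to the desired estimate in the regime $\mathcal{L}_g(\gamma)\leq\alpha'$. In the complementary regime $\mathcal{L}_g(\gamma)>\alpha'$, the bound $\mathcal{W}(f)\geq 4\pi$ together with $\xi<\pi/2$ gives $6\pi+\xi-\mathcal{W}(f)\leq 5\pi/2$, so
\begin{equation}
\mathcal{L}_g(\gamma)>\alpha'\geq\frac{2\alpha'}{5\pi}\bigl(6\pi+\xi-\mathcal{W}(f)\bigr),
\end{equation}
which is absorbed into $C(n)$. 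Undoing the rescaling restores the $\sqrt{\mathcal{A}(f)}$ factor. There is no genuine obstacle in this proof: all the real work is already contained in \Cref{lem:42}, \Cref{rem:42_angle_control}, and \Cref{lem:W_in_S_Di}, and the remaining step is merely to observe that the noninjectivity hypothesis forces $N_{k_0}\geq 1$ so that the extra $\xi$ survives in the final estimate.
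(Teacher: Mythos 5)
Your proposal is correct and follows essentially the same route the paper intends, since the paper's own justification is precisely that the proof of \Cref{thm:main} goes through with \Cref{lem:W_in_S_Di}(iii) (together with $N_{k_0}\geq 1$, which the proof of \Cref{lem:42} guarantees for every tile) replacing part (ii). The only cosmetic point is that one should reduce to $\mathcal W(f)<6\pi+\xi$ rather than $6\pi$ (the statement being trivial otherwise), which changes the intermediate constants only by absolute factors since $\xi<\pi/2$.
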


\section{Closed geodesics on surfaces of revolution} \label{sec:spheroids}
Finding closed geodesics on a spheroid is a classical problem, see for instance \cite[Section 3.5]{Klingenber1995}, and the references therein. We are particularly interested in constructing closed geodesics on arbitrarily thin pieces of a spheroid having any given number of self-intersections, see \Cref{lem:spheroids} and \Cref{fig:spheroid_plots} below. To that end, we follow the approach in \cite{Alexander} which reduces the task of determining the number of intersections and the closedness of the curve to solving a suitable integral equation, see \Cref{lem:geod_surf_revol}.

%In this section, we will apply a method of Alexander \cite{Alexander} to prove the existence of a particular class of closed geodesics on spheroids. In particular, as we see in \Cref{lem:spheroids} and \Cref{fig:spheroid_plots} below, we may construct closed geodesics on arbitrarily thin pieces of a spheroid having any given number of self-intersections. The key idea is that the task of determining the number of intersections and the closedness of the curve can be reduced to solving a suitable integral equation, see \Cref{lem:geod_surf_revol}.

Consider a general surface of revolution parametrized by
\begin{equation}\label{eq:ap:surface_of_revolution}
    f(u_1,u_2) = (h(u_2)\cos(u_1),h(u_2)\sin(u_1),g(u_2))
\end{equation}
where $h,g$ are smooth real-valued functions with $h>0$ and $u_1\in \R$, $u_2\in J$, where $J\subset \R$ is an interval.
Let $\gamma \vcentcolon= \sqrt{(h')^2 + (g')^2}$. Then the coefficients of the metric tensor are given by 
\begin{equation}\label{eq:ap:metric}
    E = \langle \partial_1 f, \partial_1f \rangle = h^2, \qquad F = \langle \partial_1f, \partial_2f\rangle = 0,\qquad G = \langle \partial_2f,\partial_2f\rangle = \gamma^2.
\end{equation}
For a curve with coordinate functions $u_1(t), u_2(t)$, 
the geodesic equations are
\begin{align}\label{eq:ap:geodesic1}
    \ddot u_1 + 2\frac{h'(u_2)}{h(u_2)}\dot u_1\dot u_2 & = 0,  \\ \label{eq:ap:geodesic2}
    \ddot u_2 - \frac{h(u_2)h'(u_2)}{\gamma(u_2)^2} \dot u_1\dot u_1 + \frac{\gamma'(u_2)}{\gamma(u_2)} \dot u_2 \dot u_2 & = 0,
\end{align}
where the dot derivative is with respect to the time variable $t$, see \cite[p.~5]{Alexander}. The first equation is $(\dot u_1h(u_2)^2)^{\bigcdot} =0$. Thus, $\dot u_1 h(u_2)^2 =c$ for some constant $c$, which is known as 
% C.2
\emph{Clairaut's principle}. A geodesic of unit speed satisfies
\begin{equation}\label{eq:ap:unit_speed}
    1=\dot u_1^2E + \dot u_2^2G = \dot u_1^2h^2 + \dot u_2^2\gamma^2.
\end{equation}
Thus, by Clairaut's principle (cf. \cite[p.~6]{Alexander}),
\begin{equation}\label{eq:ap:Clairaut}
    \dot u_1 = \frac{c}{h(u_2)^2},\qquad|\dot u_2| = \frac{1}{\gamma}\sqrt{1- \frac{c^2}{h(u_2)^{2}}}.
\end{equation}

\begin{lemma}\label{lem:geod_surf_revol}
   Let $f$ be as in
   \eqref{eq:ap:surface_of_revolution} with $J=(-\alpha,\alpha)$, $\alpha>0$. Suppose
   % C.3
   \begin{align}\label{eq:hyp_on_h}
   \begin{split}
     &h>0,\quad h(x)=h(-x),\quad h'(x)<0 \text{ for }x\in (0,\alpha),\\
     & h(0)=1, \quad \lim_{x\to\alpha-}h(x)=0.
    \end{split}
   \end{align}
    Let $c\in (0,1)$, and let $\zeta_c(t) = f(u_1(t),u_2(t))$ be the unique unit speed geodesic on the surface of revolution %paramertrized
    % X. 13
    parametrized
    by $f$ with $u_1(0)=u_2(0)=0$, $\dot u_1(0)=c$, and $\dot u_2(0)=\sqrt{1-c^2}>0$. Then the following holds.
    \begin{enumerate}[label=(\roman*)]
        \item\label{item:geod_max} $\zeta_c\colon \R\to f(\mathbb{S}^1\times J)$ exists globally, we have $\max_\R u_2 = u_2(t_0) \in (0,\alpha)$ for some unique $t_0>0$, and this strict global maximum is given by the unique $t_0>0$ with $u_2(t_0)>0$ and $h(u_2(t_0))=c>0$.
        \item\label{item:def_I_c} If, in addition, $g$ is odd and 
            \begin{equation}\label{eq:ap:def_I_c}
   I_c \vcentcolon= -2c\int_{c}^{h(0)}\frac{\gamma(h^{-1}(y))}{y\sqrt{y^2-c^2}}\frac{\mathrm dy}{h'(h^{-1}(y))} = (N+1)\pi,
        \end{equation}
        then $\zeta_c\vert_{[0,4t_0]}$ is closed and possesses exactly $N$ distinct self-in\-tersec\-tions.
        \item\label{item:geod_length_bound} The length of $\zeta_c$ satisfies $2(N+1)\pi c \leq \mathcal{L}(\zeta_c) \leq \frac{2(N+1)\pi}{c}.$
    \end{enumerate}
\end{lemma}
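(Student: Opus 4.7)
The plan is to apply Clairaut's relation together with the reflection isometry $(u_1,u_2)\mapsto(u_1,-u_2)$, which is available because $h$ is even and $g$ is odd. For \ref{item:geod_max}, combining Clairaut's identity $\dot u_1 h^2=c$ with \eqref{eq:ap:unit_speed} yields $\dot u_2^2=(h^2-c^2)/(h^2\gamma^2)$, forcing $h(u_2)\geq c$ along $\zeta_c$. The hypotheses \eqref{eq:hyp_on_h} then confine $u_2$ to $[-h^{-1}(c),h^{-1}(c)]\subset(-\alpha,\alpha)$. Since $\dot u_2(0)>0$, $u_2$ strictly increases until the unique first time $t_0>0$ at which $h(u_2(t_0))=c$ and $\dot u_2(t_0)=0$; this is the strict global maximum of $u_2$ by the pendulum structure of the $u_2$-dynamics. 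Global existence of $\zeta_c$ follows from confinement to a compact subset of the surface.

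For \ref{item:def_I_c}, set $\phi(s)=\int_0^s c\gamma(u)/[h(u)\sqrt{h(u)^2-c^2}]\,du$; the change of variables $y=h(u)$ shows $\phi(h^{-1}(c))=I_c/2$. From \eqref{eq:ap:Clairaut}, the lift of $\zeta_c$ to $\R\times J$ satisfies $u_1(t)=\phi(u_2(t))$ on $[0,t_0]$ and $u_1(t)=I_c-\phi(u_2(t))$ on $[t_0,2t_0]$. Using the reflection isometry and uniqueness of geodesics, $u_2(2t_0+s)=-u_2(s)$ and $u_1(2t_0+s)=I_c+u_1(s)$ for $s\in[0,2t_0]$, so $u_1(4t_0)=2I_c=2(N+1)\pi$ and $u_2(4t_0)=0$; position and velocity thus match those at $t=0$ modulo $2\pi$ in $u_1$, and $\zeta_c|_{[0,4t_0]}$ is closed. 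For the self-intersection count, pairs $(t_1,t_2)$ with $t_1<t_2$ and $\zeta_c(t_1)=\zeta_c(t_2)$ correspond to $u_2(t_1)=u_2(t_2)$ and $u_1(t_1)\equiv u_1(t_2)\pmod{2\pi}$. Splitting by the four monotone quarter-intervals, upper-upper crossings at height $s\in(0,u_2(t_0))$ require $I_c-2\phi(s)$ to be a positive multiple of $2\pi$ in $(0,(N+1)\pi)$, of which there are exactly $\lfloor N/2\rfloor$; lower-lower crossings contribute the same by symmetry. An upper-lower pair forces $u_2=0$ and hence only produces a crossing at the basepoint $\zeta_c(0)=\zeta_c(2t_0)=\zeta_c(4t_0)$, which gives a single distinct self-intersection precisely when $N+1$ is even (since $u_1(2t_0)=I_c\equiv 0\pmod{2\pi}$ iff $2\mid N+1$). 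A parity case-check yields a total of $N$ distinct self-intersections, and nontangentiality is guaranteed by \Cref{lem:self_int_finite}.

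For \ref{item:geod_length_bound}, the unit-speed parametrization gives $\mathcal{L}(\zeta_c)=4t_0$. Integrating Clairaut along $\zeta_c|_{[0,4t_0]}$ yields $2(N+1)\pi=2I_c=\int_0^{4t_0}c/h(u_2(t))^2\,dt$; since $c\leq h(u_2)\leq h(0)=1$ along $\zeta_c$, we have $c\leq c/h^2\leq 1/c$, hence $4ct_0\leq 2(N+1)\pi\leq 4t_0/c$, which rearranges to the stated length bounds. The principal obstacle is the self-intersection count in \ref{item:def_I_c}: one must carefully treat the parity of $N+1$ and verify that the basepoint is counted as a single distinct point despite being traversed at three parameter values when $N+1$ is even, while simultaneously excluding the endpoint identification $t_1=0,\ t_2=4t_0$ from contributing a spurious crossing.
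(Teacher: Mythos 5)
Your proposal is correct and follows essentially the same route as the paper: Clairaut's relation for the confinement and turning point, the reflection symmetries combined with uniqueness of geodesics to extend over $[0,4t_0]$ and close the curve, the self-intersection count via congruences of the $u_1$-increment $I_c-2\phi(s)$ with the same parity bookkeeping (including the basepoint crossing exactly when $N$ is odd), and the identical length estimate from integrating $\dot u_1=c/h^2$. The only spot argued more briefly than in the paper is the finite-time attainment of the maximum of $u_2$ (your appeal to the ``pendulum structure''), which the paper justifies via strict concavity of $u_2$ and a contradiction argument ruling out $t_0=\infty$; your first-integral formula $\dot u_2=\sqrt{h^2-c^2}/(h\gamma)$ together with $h'(h^{-1}(c))\neq 0$ fills this in, since the turning-point integral has only an integrable square-root singularity.
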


\begin{proof}
    After reparametrization of the curve $(h,g)$, we may assume that it is parametrized by arc length, i.e.\ $\gamma\equiv 1$,  while still satisfying  \eqref{eq:hyp_on_h} with a suitably modified interval $J$.
    Such a reparametrization will not affect the surface, the maximum of $u_2$, or the curve $\zeta_c$, and also $I_c$ is invariant under replacing $h$ with $h\circ \varphi$ with $\varphi(0)=0$ and $\varphi'>0$.

    Suppose that there exists a sequence $t_n\to t^*$ with $u_2(t_n)\to \alpha$. Since $h(\alpha)=0$, \eqref{eq:ap:unit_speed} and \eqref{eq:ap:Clairaut} yield a contradiction. Hence, $\sup u_2 <\alpha$, and thus $\zeta_c$ does not intersect $f(\R\times \{\alpha\})$ and, with a similar argument using that $h$ is even, neither $f(\R\times \{-\alpha\})$. Consequently $u_1,u_2$ and $\zeta_c$ exist globally, i.e.\ for all $t\in \R$.
    Now, \eqref{eq:ap:geodesic2} reads
    \begin{align}
        \ddot{u}_2 = h(u_2)h'(u_2)\dot{u}_1^2.
    \end{align}
    As $c>0$, we have $\dot{u}_1>0$ as long as $u_2\in (0,\alpha)$ by \eqref{eq:ap:Clairaut}, and thus, by \eqref{eq:hyp_on_h}, $u_2$ is strictly concave on $u_2^{-1}(0,\alpha)$.
    We take $$t_0=\sup\{T>0\mid \dot{u}_2(t)>0\text{ for all }t\in [0,T]\}\in (0,\infty].$$
    By monotonicity and since $\sup u_2<\alpha$, the limit $u_\mathrm{max} = \lim_{t\to t_0-} u_2(t) \in (0,\alpha)$ exists. Moreover, $\dot u_2$ is strictly 
    % C.1
    decreasing in $(0, t_0)$ and hence also $\lim_{t\to t_0-}\dot{u}_2(t)$ exists. Now, if $t_0 = \infty$ was true, then there would exist a sequence $t_n\to t_0=\infty$ with $\lim_{n\to\infty}\ddot{u}_2(t_n) =0$ so that \eqref{eq:ap:geodesic2} and \eqref{eq:ap:Clairaut}
    would imply $h'(u_\mathrm{max})=0$ which by \eqref{eq:hyp_on_h} contradicts $u_\mathrm{max}>0$. 
    Thus $t_0<\infty$. By continuity $\dot{u}_2(t_0)=0$, and thus by concavity $u_2$ attains its strict global maximum $u_\mathrm{max}=u_2(t_0)>0$ at $t_0$, satisfying $h(u_2(t_0))=c$ by \eqref{eq:ap:Clairaut}. On the other hand, any $t_0>0$ with $u_2(t_0)>0$ and $h(u_2(t_0))=c>0$ necessarily satisfies  $\dot{u}_2(t_0)=0$ by \eqref{eq:ap:Clairaut}, and the proof of \ref{item:geod_max} is complete.

     For \ref{item:def_I_c}, define $I_c$ as above and let $t_0\in \R$ as in \ref{item:geod_max}. By the computation in \cite[p.~6]{Alexander}, we have
\begin{equation}\label{eq:ap:u_1-increment}
    u_1(t_0) - u_1(0) = \frac{I_c}{2} = \frac{(N+1)\pi}{2}.
\end{equation}
Now, for $t\in [t_0, 2t_0]$ we claim that
    \begin{align}
        \begin{split}\label{eq:ap:u_1u_2Extend}    
        u_1(t) &= -u_1(2t_0-t)+(N+1)\pi, \\
        u_2(t) &= u_2(2t_0-t). 
        \end{split}
    \end{align}
    This can be seen by verifying that the right hand side defines the component functions of a geodesic, i.e.\ a solution to \eqref{eq:ap:geodesic1}--\eqref{eq:ap:geodesic2}, which closes in a $C^1$-fashion with $\zeta_c$ at $t=t_0$ so \eqref{eq:ap:u_1u_2Extend} follows from the uniqueness of solutions to the geodesic equations. 
    We now discuss the self-intersections of $\zeta_c$ in $[0,2t_0]$. Using that $u_2$ is strictly increasing on $[0, t_0)$, we find that $\zeta_c\vert_{[0,t_0]}$ and $\zeta_c\vert_{[t_0,2t_0]}$ are injective. Now, suppose that we have $s\in [0, t_0)$ and $s'\in [t_0, 2t_0]$ such that $u_1(s)=u_1(s') \mod 2\pi$ and $u_2(s)=u_2(s')$. By \eqref{eq:ap:u_1u_2Extend} and since $\dot u_2>0$ on $[0,t_0]$, we conclude $s'=2t_0-s$ and it follows that $2u_1(s)= {(N+1)\pi} \mod 2\pi$ or equivalently
    \begin{align}\label{eq:u_1_solutions_mod_2pi}
     2u_1(s) =\begin{cases}
         \pi \mod 2\pi  & \text{if $N$ is even,}\\
         0 \mod 2\pi & \text{if $N$ is odd}.
     \end{cases}
     \end{align}
     Since $2u_1$ is strictly increasing from $0$ to $(N+1)\pi$ on $[0,t_0]$ by \eqref{eq:ap:u_1-increment}, if $N$ is even, then \eqref{eq:u_1_solutions_mod_2pi} has exactly $N/2$ solutions in $[0, t_0)$.
    If $N$ is odd, then there are exactly $(N+1)/2$ solutions of \eqref{eq:u_1_solutions_mod_2pi} in the interval $[0,t_0)$, one of which is $s=0$. Using that $h$ is an even function and $g$ is odd, we may argue as in \eqref{eq:ap:u_1u_2Extend} to show that for $t\in [2t_0, 4t_0]$ we have
    \begin{align}
        u_1(t) &= u_1(t-2t_0) + (N+1)\pi, \\
        u_2(t) &= -u_2(t-2t_0). 
    \end{align}
    This implies that $\zeta_c$ is $C^1$-closed on $[0, 4t_0]$ and statement 
    \ref{item:def_I_c} is proven.
    
    For the length bound \ref{item:geod_length_bound}, observe that by \eqref{eq:ap:Clairaut} and \eqref{eq:ap:u_1-increment} we have
    \begin{align}
        \frac{(N+1)\pi}{2} = u_1(t_0)-u_1(0) =\int_0^{t_0} \frac{c}{h(u_2(t))^2}\mathrm{d}t.
    \end{align}
    Since $h(u_2)$ is montonotically decreasing on $[0,t_0]$ with $h(u_2(0))=1$ and $h(u_2(t_0))=c$, we conclude 
    $
        ct_0 \leq {(N+1)\pi/2} \leq t_0/c.
    $
    Using that $\zeta_c$ is parametrized by arc length, we have $\mathcal{L}(\zeta_c)=4t_0$ and \ref{item:geod_length_bound} follows.
\end{proof}

\begin{figure}
     \centering
     \hfill
     \begin{subfigure}[b]{0.4\textwidth}
         \centering
         \includegraphics[width=\textwidth]{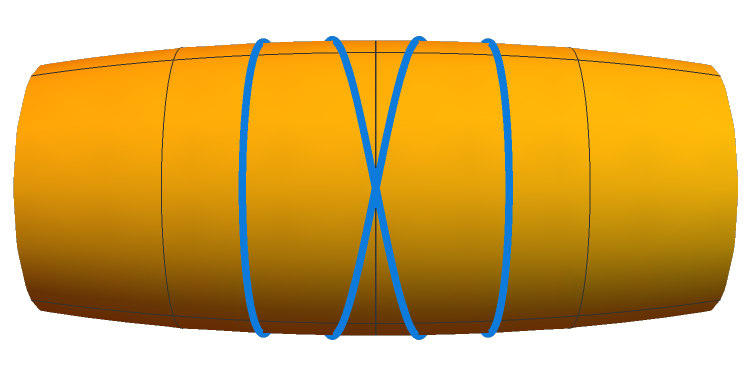}
         \caption{$b\simeq 4.038$, $c\simeq0.980$, $\varepsilon = 0.2$, $N=3$.}
     \end{subfigure}
     \hfill
     \begin{subfigure}[b]{0.4\textwidth}
         \centering
         \includegraphics[width=\textwidth]{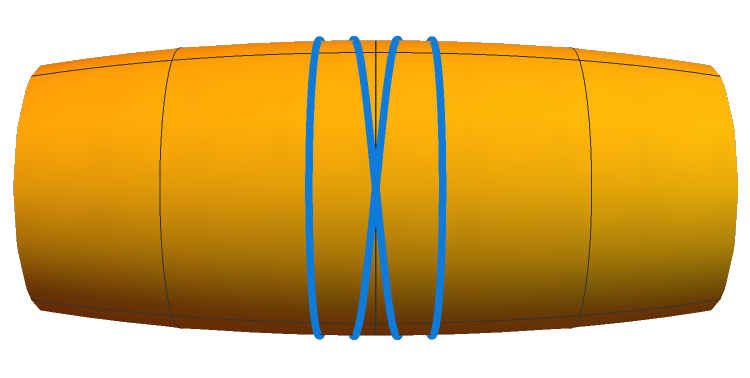}
          \caption{$b\simeq 4.009$, $c\simeq 0.995$, $\varepsilon = 0.1$, $N=3$.}
     \end{subfigure}
     \hfill\\
     \medskip
     \hfill
     \begin{subfigure}[b]{0.4\textwidth}
         \centering
         \includegraphics[width=\textwidth]{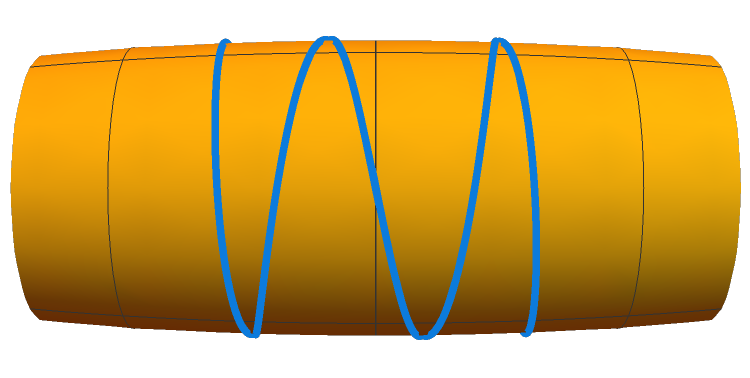}
         \caption{$b\simeq 5.049$, $c\simeq0.980$, $\varepsilon = 0.2$, $N=4$.}
     \end{subfigure}
     \hfill
     \begin{subfigure}[b]{0.4\textwidth}
         \centering
         \includegraphics[width=\textwidth]{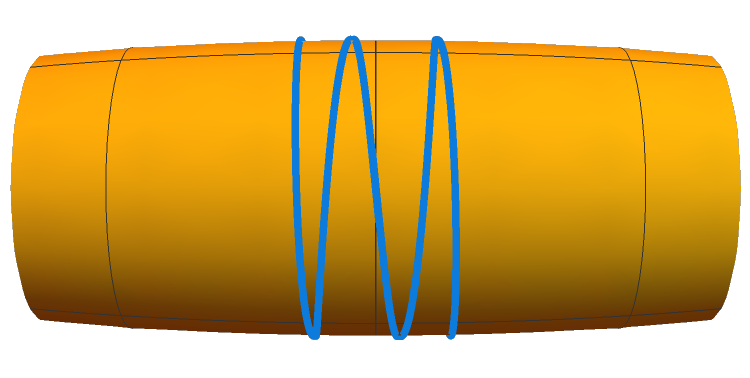}
         \caption{$b\simeq 5.012$, $c\simeq 0.995$, $\varepsilon = 0.1$, $N=4$.}
     \end{subfigure}
     \hfill
     \caption{Plots of closed geodesics with self-intersections on the spheroid for different parameters $b$, $c$, $\varepsilon$, $N$.}
     \label{fig:spheroid_plots}
 \end{figure}

\begin{lemma} \label{lem:spheroids}
    For all $N\in\N$ and $\varepsilon > 0$ there exists ${N}+1<b<{N}+1+\varepsilon$ such that the surface of revolution parametrized by $f$ as in \eqref{eq:ap:surface_of_revolution} with $(h,g)=(\cos,b\sin)$ and $J=(-\frac{\pi}{2},\frac{\pi}{2})$ contains a closed geodesic $\zeta_c =f(u_1,u_2)$ with exactly $N$ distinct self-intersections, $\max_{\R} |u_2| <\varepsilon$, and
    \begin{align}\label{eq:spheroid_length_geod}
        2(N+1)\pi (1-\varepsilon) \leq \mathcal{L}(f\circ \zeta) \leq \frac{2(N+1)\pi}{1-\varepsilon}.
    \end{align}
\end{lemma}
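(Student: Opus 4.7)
The plan is to apply \Cref{lem:geod_surf_revol} to $h(u_2) = \cos u_2$ and $g(u_2) = b\sin u_2$ on $J = (-\pi/2, \pi/2)$, which satisfy \eqref{eq:hyp_on_h} with $g$ odd. The problem then reduces to finding a pair $(c, b) \in (0,1) \times (N+1, N+1+\varepsilon)$ such that the closing identity $I_c = (N+1)\pi$ from \eqref{eq:ap:def_I_c} holds together with $\arccos c < \varepsilon$. Granted such a pair, parts (i)--(ii) of \Cref{lem:geod_surf_revol} produce a closed geodesic $\zeta_c$ with exactly $N$ self-intersections and $\max|u_2| = \arccos c < \varepsilon$, and part (iii), combined with $c > \cos\varepsilon \geq 1-\varepsilon$, yields \eqref{eq:spheroid_length_geod} (using $\mathcal L(f\circ\zeta_c) = \mathcal L_{g_f}(\zeta_c)$ from \eqref{eq:star}).

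First I would simplify the defining integral. Since $h^{-1}(y) = \arccos y$ and $\gamma(\arccos y) = \sqrt{1 + (b^2-1)y^2}$, the substitution $y^2 = 1 - (1-c^2)\cos^2\theta$ for $\theta \in [0,\pi/2]$ reduces $\tfrac{dy}{y\sqrt{y^2 - c^2}\sqrt{1 - y^2}}$ to $\tfrac{d\theta}{y^2}$, giving the explicit form
\[
\Phi(c,b) \vcentcolon= I_c = 2c\int_0^{\pi/2}\frac{\sqrt{1+(b^2-1)\big(1-(1-c^2)\cos^2\theta\big)}}{1-(1-c^2)\cos^2\theta}\,d\theta.
\]
This $\Phi$ extends continuously to $(0,1]\times[1,\infty)$ with $\Phi(1,b) = \pi b$, and differentiation under the integral sign shows that $b \mapsto \Phi(c,b)$ is strictly increasing and diverges as $b \to \infty$.

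Next I would run an intermediate value argument. Setting $c = 1 - \delta$ and expanding the integrand using $1-c^2 = 2\delta - \delta^2$ yields
\[
\Phi(c,b) = \pi b + \pi\,\delta\,\frac{1 - b^2}{2b} + O(\delta^2).
\]
For $b = N+1 \geq 2$ (that is, $N \geq 1$) the correction is strictly negative, so $\Phi(c, N+1) < \pi(N+1)$ whenever $c$ is close to $1$. Combined with the monotonicity and divergence in $b$, the IVT produces a unique $b(c) > N+1$ with $\Phi(c, b(c)) = (N+1)\pi$, and continuity forces $b(c) \to N+1$ as $c \to 1^-$. Taking $c = \cos(\varepsilon/2)$ then achieves $\arccos c = \varepsilon/2 < \varepsilon$, and, for $\varepsilon$ sufficiently small (without loss of generality, as the conclusion becomes easier for larger $\varepsilon$), also $b(c) \in (N+1, N+1+\varepsilon)$. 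The edge case $N = 0$ is handled directly by the equator $u_2 \equiv 0$ on any spheroid with $b \in (1, 1+\varepsilon)$.

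The main obstacle is ensuring $b(c) > N+1$ rather than $b(c) \leq N+1$; this rests entirely on the sign of $\pi(1-b^2)/(2b)$ in the expansion of $\Phi$, which is negative precisely because $b > 1$. Establishing the expansion is a straightforward calculation expanding both the square root in the numerator and the reciprocal in the denominator to first order in $\delta = 1-c$. With that in hand, the remaining bookkeeping --- verifying the hypotheses of \Cref{lem:geod_surf_revol} and deriving \eqref{eq:spheroid_length_geod} from $c > 1 - \varepsilon$ through part (iii) --- is routine.
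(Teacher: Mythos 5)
Your proof is correct and shares the paper's overall skeleton --- reduce via \Cref{lem:geod_surf_revol} to solving $I_c=(N+1)\pi$ with $c$ close to $1$ and $b$ slightly above $N+1$, then conclude by an intermediate value argument --- but the analytic core is genuinely different. The paper sandwiches $I_c$ between complete elliptic integrals of the first kind (via Byrd--Friedman 217.00), studies $L(c)=2\sqrt{c^2((N+1)^2-1)+1}\,K(\sqrt{1-c^2})$, shows $\lim_{c\to1^-}L'(c)>0$ using the power series of $K$ to get $I_{c_1}<(N+1)\pi$ for some $c_1$ near $1$, then perturbs $b$ and runs the IVT in $c$ for fixed $b$, using $I_c>2cbK(\sqrt{1-c^2})\to b\pi$. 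You instead substitute $y^2=1-(1-c^2)\cos^2\theta$ to turn $I_c$ into a proper integral $\Phi(c,b)$ (your formula checks out) and Taylor-expand in $\delta=1-c$, obtaining $\Phi=\pi b+\pi\delta(1-b^2)/(2b)+O(\delta^2)$; the negative first-order coefficient at $b=N+1\geq2$ plays exactly the role of the paper's $L'(1)>0$, and you then run the IVT in $b$ for fixed $c$, using strict monotonicity and divergence of $\Phi(c,\cdot)$. This buys an elementary, self-contained argument with no elliptic-integral input (and your separate treatment of $N=0$ via the equator is welcome: the paper's limit $\lim_{c\to1^-}L'(c)$ is in fact negative for $N=0$, so its argument implicitly assumes $N\geq1$), at the modest cost of justifying the expansion, which is routine since the integrand is smooth in $(1-c^2,\theta)$ uniformly on $[0,\pi/2]$. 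One small point to tighten: fixing $c=\cos(\varepsilon/2)$ and then asserting $b(c)\in(N+1,N+1+\varepsilon)$ ``for $\varepsilon$ small'' does not follow from the bare limit $b(c)\to N+1$ alone; either invoke your expansion quantitatively (it gives $b(c)-(N+1)=O(\varepsilon^2)$), or simply decouple the choices: for fixed $\varepsilon$, take $c$ so close to $1$ that both $\arccos c<\varepsilon$ and $\Phi(c,N+1+\varepsilon)>(N+1)\pi$, the latter forcing $b(c)<N+1+\varepsilon$ by monotonicity in $b$.
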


\begin{proof}
    Let $\zeta_c$, $u_1$, $u_2$ be as in \Cref{lem:geod_surf_revol}. Clearly, $h=\cos$ satisfies \eqref{eq:hyp_on_h}. By \Cref{lem:geod_surf_revol}\ref{item:geod_max}, the geodesic $\zeta_c(t)=f(u_1(t), u_2(t))$ exists globally and $\max_\R u_2(t_0)=h^{-1}(c) = \arccos c\to 0+$ as $c\to 1-$. The statement thus follows from \Cref{lem:geod_surf_revol}\ref{item:geod_max}--\ref{item:geod_length_bound} if we can prove that given $\varepsilon\in (0,1)$ and $N\in \N$, there exists $1-\varepsilon<c<1$ and $N+1<b<N+1+\varepsilon$ such that $I_c=(N+1)\pi$ with $I_c$ as in \eqref{eq:ap:def_I_c}.  
    By direct computation
    \begin{equation}
        h'(h^{-1}(y)) = -\sqrt{1-y^2},\qquad \gamma(x)^2 = h'(x)^2 + g'(x)^2 = 1 + (b^2-1)h(x)^2,
    \end{equation}
    which gives 
    \begin{align}
        I_c &= 2c\int_{c}^1\frac{\sqrt{1 + (b^2 - 1)y^2}}{y\sqrt{y^2 - c^2}}\frac{\mathrm dy}{\sqrt{1 - y^2}} \\
        &= 2c\int_c^1\frac{\sqrt{b^2-1 +y^{-2}}}{\sqrt{(y^2 - c^2)(1-y^2)}}\,\mathrm dy.\label{eq:Icformula}
    \end{align}
    The complete elliptic integral of the first kind is defined by
    \begin{equation}
        K(k) = \int_0^{\frac{\pi}{2}}\frac{\mathrm dx}{\sqrt{1-k^2\sin^2x}}\qquad \text{for $0\leq k <  1$,}
    \end{equation}
    see \cite[110.06]{ByrdFriedman}. Obviously, $K\geq K(0)= \pi/2$. By \cite[217.00]{ByrdFriedman}, as $0<c<1$ there holds 
    \begin{align} 
        I_c &< 2\sqrt{c^2(b^2-1) +1}\int_c^1\frac{\mathrm dy}{\sqrt{(y^2 - c^2)(1-y^2)}} \\
        &=2\sqrt{c^2(b^2-1) +1} \,K(\sqrt{1-c^2}).\label{eq:ap:lower_bound}
    \end{align}%
    Consider now
    \begin{equation}
        L(c)\vcentcolon = 2\sqrt{c^2((N+1)^2-1) +1} \,K(\sqrt{1-c^2})\qquad \text{for $0<c<1$.}
    \end{equation}
    There holds
    \begin{align}
        L'(c)&=\frac{2c((N+1)^2 -1)K(\sqrt{1-c^2})}{\sqrt{c^2((N+1)^2-1)+1}} \\
        &\quad + 2\sqrt{c^2((N+1)^2-1)+1}\,K'(\sqrt{1-c^2})\frac{-c}{\sqrt{1-c^2}}.
    \end{align}
    From the power series expansion of $K$, see \cite[900.00]{ByrdFriedman}, we infer $\lim_{k\to0+}K(k) = \pi/2$ as well as $\lim_{k\to0+}K'(k)/k = \pi/4$ which implies
    \begin{align}
        \lim_{c\to1-}L'(c) &= \frac{2((N+1)^2 - 1)\pi/2}{N+1} - 2(N+1)\pi/4 \\
        &= \pi \Bigl(N+1 - \frac{1}{N+1} - \frac{N+1}{2}\Bigr) > 0.
    \end{align}
    Since $L(1) = (N+1)\pi$, it thus follows $L(c_1)<(N+1)\pi$ for some $1-\varepsilon < c_1 < 1$. By \eqref{eq:ap:lower_bound} and a continuity argument, we can now choose $N+1 < b <N+1 +\varepsilon$ such that $I_{c_1}<(N+1)\pi$. On the other hand, \eqref{eq:Icformula} implies that
    \begin{equation}
        I_c>2cb\int_c^1\frac{\mathrm dx}{\sqrt{(x^2 - c^2)(1-x^2)}} = 2cbK(\sqrt{1-c^2}) \overset{c\to 1-}{\to} b\pi >(N+1)\pi.
    \end{equation}
    Hence, there exists $c_2$ with $c_1<c_2<1$ such that $I_{c_2}>(N+1)\pi$. Now, the conclusion follows from the intermediate value theorem.
\end{proof}

\section{Sphere inversion and spherical replacement}
\label{sec:optimality_genus}

In this section, we construct the embeddings $f_\varepsilon$ in Example \ref{ex:genus_zero} using a sphere inversion and an appropriate spherical replacement. The goal is that the inverted surface does not only look like a round sphere, but in fact contains a round sphere with a small cap removed. It is then not too difficult to find a short curve that we may evolve by the curve shortening flow. The avoidance principle  ensures that the curve does not shrink to a point, hence the flow exists globally and converges to a short geodesic.

Let first $f\colon \Sigma \to \R^n$ be any smooth embedding, and let $D\vcentcolon=\{z\in\R^2\mid |z|<1\}$ be the open unit disk. After a rigid motion of the ambient space, we can choose a parametrization $\phi\colon D\to\Sigma$ such that $f$ has the local graph representation 
\begin{equation}\label{eq:ap:graph}
    (f\circ \phi)(z) = (z,u(z))
\end{equation}
for some $u\in C^\infty(D;\R^{n-2})$ with $u(0)=0$ and $\mathrm Du(0)=0$. Choose a function $\eta\in C^\infty(\R;\R)$ such that 
\begin{equation}
    \eta(t) = 
    \begin{cases}
        1 & \text{for $t\geq2$}\\
        0 & \text{for $t\leq 1$}
    \end{cases}
\end{equation}
and 
\begin{equation}\label{eq:ap:cut-off}
    |\eta| + |\eta'| + |\eta''|<C
\end{equation}
for some universal constant $C<\infty$. For all $\delta > 0$ define
\begin{equation}
    \eta_\delta(z)\vcentcolon=\eta\Bigl(\frac{|z|}{\delta}\Bigr), \quad u_\delta(z) \vcentcolon =u(z)\eta_\delta(z)\qquad \text{for $z\in D$}
\end{equation}
and let $f_\delta \colon \Sigma \to \R^n$ be the immersion that results by replacing $u$ in \eqref{eq:ap:graph} with $u_\delta$ (for $\delta <1/2$). We are going to prove the following energy expansion.
\begin{lemma}\label{lem:replacement}
    There exists a universal constant $C<\infty$ such that
    \begin{equation}
        |\mathcal W(f) - \mathcal W(f_\delta)| \leq C \|\mathrm D^2u\|_{L^\infty(D)}\delta^2
    \end{equation}
    for $\delta>0$ small enough. 
\end{lemma}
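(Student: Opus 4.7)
The key observation is that the modification is local: since $\eta_\delta\equiv 1$ outside $B_{2\delta}(0)$, we have $u_\delta\equiv u$ there, and therefore $f_\delta\equiv f$ on $\Sigma\setminus \phi(B_{2\delta}(0))$. Consequently
\begin{equation}
    \mathcal{W}(f)-\mathcal{W}(f_\delta) = \mathcal{W}(f,\phi(B_{2\delta}(0)))-\mathcal{W}(f_\delta,\phi(B_{2\delta}(0))),
\end{equation}
so by the triangle inequality it suffices to bound each Willmore energy on $\phi(B_{2\delta}(0))$ separately by $C\|D^2 u\|_{L^\infty(D)}\,\delta^2$ up to harmless constants.

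Next, I would derive $C^2$-estimates for $u$ and $u_\delta$ on $B_{2\delta}(0)$. Since $u(0)=0$ and $\mathrm{D}u(0)=0$, Taylor's theorem gives $|u(z)|\leq \tfrac{1}{2}\|D^2 u\|_{L^\infty}|z|^2$ and $|\mathrm Du(z)|\leq \|D^2 u\|_{L^\infty}|z|$ for $z\in B_{2\delta}(0)$. Combining this with \eqref{eq:ap:cut-off} and the rescalings $|\mathrm D\eta_\delta|\leq C/\delta$, $|\mathrm D^2\eta_\delta|\leq C/\delta^2$, the product rule yields on $B_{2\delta}(0)$
\begin{equation}
    |\mathrm D u_\delta|\leq C\|D^2u\|_{L^\infty}\delta, \qquad |\mathrm D^2 u_\delta|\leq C\|D^2 u\|_{L^\infty},
\end{equation}
so both $|\mathrm Du|$ and $|\mathrm Du_\delta|$ are small (provided $\delta$ is small), and both second derivatives are controlled by $\|D^2 u\|_{L^\infty}$.

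Now I would use the standard graph formulas to transfer these bounds into Willmore energy bounds. For any smooth $w\colon B_{2\delta}(0)\to\R^{n-2}$ with $|\mathrm Dw|\leq 1$, the immersion $z\mapsto (z,w(z))$ has area element $\mathrm d\mu_w = \sqrt{\det(I+\mathrm Dw^T\mathrm Dw)}\,\mathrm dz$ and mean curvature vector satisfying the pointwise bound $|H_w|\leq C|\mathrm D^2 w|$, where $C$ depends only on a bound for $|\mathrm Dw|$. Applying this to $w\in\{u,u_\delta\}$ and integrating over $B_{2\delta}(0)$ with area at most $4\pi\delta^2$, one obtains
\begin{equation}
    \mathcal{W}(f,\phi(B_{2\delta}(0)))+\mathcal{W}(f_\delta,\phi(B_{2\delta}(0))) \leq C\,\|D^2 u\|_{L^\infty(D)}^2\,\delta^2.
\end{equation}
Since a fortiori this is bounded by $C\,\|D^2u\|_{L^\infty(D)}\,\delta^2$ after absorbing an additional factor of $\|D^2 u\|_{L^\infty(D)}$ into the constant (which is permissible since $\|D^2 u\|_{L^\infty(D)}$ is a fixed finite quantity depending on $f$), the stated estimate follows.

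The only mildly delicate point is the book-keeping of the cut-off derivatives versus the Taylor decay of $u$: the dangerous-looking term $u\cdot \mathrm D^2\eta_\delta$ contributing to $\mathrm D^2 u_\delta$ behaves like $(\|D^2 u\|_\infty\delta^2)(1/\delta^2)=\|D^2 u\|_\infty$ thanks to the exact matching of the two powers of $\delta$, which relies crucially on $u(0)=0$ and $\mathrm D u(0)=0$. This exact cancellation is the structural heart of the estimate; once observed, the rest is a routine graph computation.
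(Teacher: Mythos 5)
Your proof is correct and follows essentially the same approach as the paper: locality of the modification on $2\delta D$, the Taylor decay of $u$ at the origin matching the scaling $|\mathrm D^2\eta_\delta|\leq C\delta^{-2}$ of the cut-off, and graph estimates on the small disk, with the paper merely routing the final step through a comparison of $|H|^2\sqrt{\det g}$ with $|\Delta u|^2$ (via Lemma 2.1 of Bauer--Kuwert) instead of your direct pointwise bound $|H_w|\leq C|\mathrm D^2 w|$. Note that the paper's own computation also ends with the quadratic factor $C\|\mathrm D^2 u\|_{L^\infty}^2\delta^2$, so your absorption of one factor of $\|\mathrm D^2 u\|_{L^\infty}$ into the constant matches what the paper implicitly does.
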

\begin{proof}
    With $I_{2\times 2}$ denoting the $2\times 2$-identity matrix, we have
    \begin{align}
        \mathrm D\eta_\delta(z) &= \frac{1}{\delta}\eta'\Bigl(\frac{|z|}{\delta}\Bigr)\frac{z}{|z|},\\
        \mathrm D^2\eta_\delta(z) &=\frac{1}{\delta^2}\eta''\Bigl(\frac{|z|}{\delta}\Bigr)\frac{z}{|z|}\otimes \frac{z}{|z|} + \frac{1}{\delta|z|}\eta'\Bigl(\frac{|z|}{\delta}\Bigr)\Bigl(I_{2\times 2} - \frac{z}{|z|}\otimes \frac{z}{|z|}\Bigr)
    \end{align}
    and hence, by \eqref{eq:ap:cut-off},
    \begin{equation}\label{eq:ap:expansion_cut-off}
        |\mathrm D\eta_\delta| \leq \frac{C}{\delta},\qquad |\mathrm D^2\eta_\delta| \leq \frac{C}{\delta^2}.
    \end{equation}
    Since 
    \begin{align}
        \mathrm Du_\delta & = \eta_\delta \mathrm Du + u \mathrm D\eta_\delta,\\
        \mathrm D^2u_\delta & = \mathrm Du\otimes\mathrm D\eta_\delta +\mathrm D\eta_\delta \otimes \mathrm Du + \eta_\delta\mathrm D^2u + u\mathrm D^2\eta_\delta
    \end{align}
    and, by Taylor expansion, 
    \begin{equation}
        \frac{|u(z)|}{|z|^2} + \frac{|\mathrm Du(z)|}{|z|} \leq C\|\mathrm D^2u\|_{L^\infty(D)}
    \end{equation}
    it thus follows
    \begin{equation}
        |\mathrm Du_\delta| \leq C(|z|+\delta)\|\mathrm D^2u\|_{L^\infty(D)},\qquad |\mathrm D^2 u_\delta| \leq C\|\mathrm D^2u\|_{L^\infty(D)}.
    \end{equation}
    Denoting with $g$ and $g_\delta$ the metric tensors induced by $f$ and $f_\delta$, respectively, in the parametrization $\phi$ and using \cite[Lemma 2.1]{BauerKuwert}, we infer for $\delta>0$ with 
    \begin{equation}\label{eq:ap:smallness}
        |\mathrm Du(z)| + |\mathrm Du_\delta(z)|\leq 1\qquad\text{for all $z\in 2\delta D\vcentcolon= \{ 2\delta z \; | \; z \in D\}$}
    \end{equation}
    that
    \begin{align}
        &|\mathcal W(f) - \mathcal W(f_\delta)|=\bigl\||H|^2\sqrt{\det g} - |H_\delta|^2\sqrt{\det g_\delta}\bigr\|_{L^1(2\delta D)}\\
        &\quad\leq \bigl\||H|^2\sqrt{\det g} - |\Delta u|^2\bigr\|_{L^1(2\delta D)} + \bigl\||\Delta u|^2 - |\Delta u_\delta|^2\bigr\|_{L^1(2\delta D)} \\
        &\qquad + \bigl\||\Delta u_\delta|^2 - |H_\delta|^2\sqrt{\det g_\delta}\bigr\|_{L^1(2\delta D)}\\
        &\quad \leq C\Bigl(\bigl\||\mathrm Du||\mathrm D^2u|\bigr\|_{L^2(2\delta D)}^2 + \|\mathrm D^2u\|_{L^2(2\delta D)}^2 + \|\mathrm D^2u_\delta\|_{L^2(2\delta D)}^2 \\
        &\qquad  \qquad + \bigl\||\mathrm Du_\delta||\mathrm D^2u_\delta|\bigr\|_{L^2(2\delta D)}^2\Bigr) \\
        &\quad \leq C\|\mathrm D^2u\|_{L^\infty(2\delta D)}^2\delta^2.
    \end{align}
    Hence, the conclusion follows since \eqref{eq:ap:smallness} is satisfied for small $\delta>0$.
\end{proof}
Now let $\nu\in\{(0,0)\}\times \R^{n-2}$ be a vector of unit length. For all $\lambda>0$ define the Möbius transformation
\begin{equation}\label{eq:moebIlambda}
    I_\lambda\colon \R^n\setminus\{-\lambda\nu\}\to \R^n,\qquad I_\lambda(x) \vcentcolon = 2\lambda\frac{x + \lambda\nu}{|x + \lambda\nu|^2}
\end{equation}
and the unit sphere $$\mathbb S^2_\nu\vcentcolon=\{(z,0) +t\nu\mid z\in\R^2,\,t\in\R,\, |(z,0) + t\nu - \nu| =1\}\subset \R^n$$ with center $\nu$. Then the following holds.

\begin{lemma}\label{lem:cap}
    Suppose $\Sigma$ has genus $p\geq1$ and $f_\delta\colon \Sigma \to\R^n$ is defined as before \Cref{lem:replacement}. Then, for all $\delta,\lambda>0$ small enough, the surface $(I_\lambda\circ f_\delta)\colon\Sigma\to\R^n$  satisfies
    \begin{align}\label{eq:ap:cap}
        %&\mathbb S^2_\nu\setminus(I_\lambda\circ f_\delta\circ\phi)(\delta D )\subset\Bigl\{(z,\zeta)\in\R^2\times\R^{n-2}\mid |\zeta| \leq \frac{2\lambda^2}{\delta^2}\Bigr\},\\ 
        &(I_\lambda\circ f_\delta\circ \phi)(\delta D) = \mathbb S^2_\nu \cap \Bigl\{(z,\zeta)\in\R^2\times\R^{n-2}\mid |\zeta|>\frac{2\lambda^2}{\delta^2+\lambda^2}\Bigr\}, \\
        \label{eq:inE-minus}
        &\|I_\lambda\circ f_\delta\|_{L^\infty(\Sigma\setminus\phi(\delta D))} \leq \frac{2\lambda}{\delta}.
    \end{align}
    Moreover, the closed curve $\gamma_{\lambda,\delta}\vcentcolon=(I_\lambda\circ f_\delta\circ\phi)\colon\partial \delta D\to\R^n$ has length $\mathcal L(\gamma_{\lambda,\delta})=2\pi\frac{2\lambda\delta}{\delta^2 + \lambda^2} \leq 2\pi\frac{2\lambda}{\delta}$, is null-homotopic in $(I_\lambda\circ f_\delta\circ \phi)(\delta \bar D)$, but not null-homotopic in $(I_\lambda\circ f_\delta)\bigl(\Sigma\setminus\phi(\delta D)\bigr)$.
\end{lemma}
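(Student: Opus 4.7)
The approach is a direct computation using the structure of $I_\lambda$ as a conformal inversion, combined with a topological argument that relies on $f_\delta$ being an embedding.

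I would begin by noting that since $\eta_\delta\equiv 0$ on $\delta D$, we have $u_\delta\equiv 0$ there, so $(f_\delta\circ\phi)(\delta D) = \delta D\times\{0\}\subset\R^2\times\{0\}$. For a point $x=(z,0)$, a direct calculation from \eqref{eq:moebIlambda} yields
\begin{equation}
   I_\lambda(z,0) \;=\; \Bigl(\frac{2\lambda z}{|z|^2+\lambda^2},\; \frac{2\lambda^2}{|z|^2+\lambda^2}\nu\Bigr),
\end{equation}
whose $\R^2$-component $w$ and $\nu$-component $t = 2\lambda^2/(|z|^2+\lambda^2)$ satisfy $|w|^2+(t-1)^2=1$, showing that $I_\lambda$ maps $\R^2\times\{0\}$ into $\mathbb S^2_\nu$. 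As $|z|$ ranges over $[0,\delta)$, $t$ sweeps $(\tfrac{2\lambda^2}{\delta^2+\lambda^2},2]$, which is precisely \eqref{eq:ap:cap}. For $|z|=\delta$, the image is a round Euclidean circle of radius $\tfrac{2\lambda\delta}{\delta^2+\lambda^2}$ in the affine plane at height $\tfrac{2\lambda^2}{\delta^2+\lambda^2}$, hence of length $2\pi\tfrac{2\lambda\delta}{\delta^2+\lambda^2}\leq\tfrac{4\pi\lambda}{\delta}$.

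Next, for the $L^\infty$-bound \eqref{eq:inE-minus}, I would use the identity $|I_\lambda(x)|=2\lambda/|x+\lambda\nu|$ and show $|f_\delta(p)+\lambda\nu|\geq\delta$ for $p\in\Sigma\setminus\phi(\delta D)$. For $p=\phi(z)$ with $|z|\geq\delta$, the first two components of $f_\delta(p)+\lambda\nu$ are exactly $z$ (since $\lambda\nu\in\{0\}\times\R^{n-2}$), giving $|f_\delta(p)+\lambda\nu|\geq|z|\geq\delta$. For $p\notin\phi(D)$, we have $f_\delta(p)=f(p)$, and since $f$ is an embedding with $f(\phi(0))=0$, the quantity $|f(p)|$ is bounded below by a positive constant on the compact set $\Sigma\setminus\phi(D)$; the inequality then holds for $\lambda,\delta$ sufficiently small by the triangle inequality.

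Finally, for the topological assertions about $\gamma_{\lambda,\delta}$, null-homotopy inside $(I_\lambda\circ f_\delta\circ\phi)(\delta\bar D)$ is immediate since this set is a topological disk bounded by $\gamma_{\lambda,\delta}$. For the complementary claim, I would first verify that $f_\delta$ is a global embedding for small $\delta$: it coincides with the embedding $f$ outside $\phi(2\delta D)$, and inside it is a graph whose image shrinks to $0\in\R^n$ as $\delta\to 0$ and is therefore disjoint from the rest of $f(\Sigma)$. Hence $(I_\lambda\circ f_\delta)|_{\Sigma\setminus\phi(\delta D)}$ is a homeomorphism onto its image, reducing the question to whether $\phi(\partial\delta D)$ is null-homotopic in $\Sigma\setminus\phi(\delta D)$. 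The latter is a closed orientable surface of genus $p$ with an open disk excised, whose fundamental group is free of rank $2p$ on generators $a_1,b_1,\dots,a_p,b_p$ with the boundary curve represented by the commutator product $[a_1,b_1]\cdots[a_p,b_p]$; this element is nontrivial in the free group whenever $p\geq 1$, so $\gamma_{\lambda,\delta}$ is not null-homotopic there. The main obstacle is this last step, where one must carefully confirm that $f_\delta$ is a genuine embedding for small $\delta$ so that the topology of $\Sigma\setminus\phi(\delta D)$ faithfully transfers to the image; the remaining items reduce to unpacking the explicit formula for $I_\lambda$.
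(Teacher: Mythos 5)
Your argument is correct and follows essentially the same route as the paper's proof: compute $I_\lambda(z,0)$ explicitly to identify the image of the flattened disk as a spherical cap in $\mathbb S^2_\nu$ and read off the boundary circle's radius; use $|I_\lambda(x)|=2\lambda/|x+\lambda\nu|$ for the $L^\infty$-bound; and conclude the topological statement from the fact that a compact genus-$p$ surface with one boundary circle ($p\geq1$) cannot have null-homotopic boundary. You spell out a couple of steps the paper merely asserts — notably the lower bound $|f_\delta(p)+\lambda\nu|\geq\delta$ on $\Sigma\setminus\phi(\delta D)$ and the fundamental-group computation behind the non-null-homotopy — but the structure of the argument is identical.
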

\begin{proof}
    First choose $\lambda,\delta>0$ small enough such that $f_\delta$ is an embedding, $-\lambda\nu\notin f_\delta(\Sigma)$, and
    \begin{equation}\label{eq:ap:dist}
        \dist\Bigl(f_\delta(\Sigma\setminus\phi(\delta D)),-\lambda\nu)\Bigr)\geq\delta.
    \end{equation}    
    Noting that $u_\delta = 0$ on $\delta D$ and that $I_\lambda\vert_{\R^2\times{0}^{n-2}}$ parametrizes the punctured sphere $\mathbb S^2_\nu\setminus\{0\}$, the decomposition
    \begin{equation}
        I_\lambda(z,0) = \frac{2\lambda (z,0)}{|z|^2+\lambda^2} + \frac{2\lambda^2\nu}{|z|^2+\lambda^2}\qquad\text{for all $z\in \R^2$}
    \end{equation}
    implies both, \eqref{eq:ap:cap} and $\mathcal L(\gamma_{\lambda,\delta})=2\pi\frac{2\lambda\delta}{\delta^2 + \lambda^2}$. Using \eqref{eq:ap:dist}, \eqref{eq:inE-minus} directly follows from the definition of $I_\lambda$.
    Clearly, $\gamma_{\lambda,\delta}$ is null-homotopic in %$(I_\lambda\circ f_\delta)(\Sigma)$ 
    $(I_\lambda\circ f_\delta \circ \phi)(\delta \bar{D})$
    since it is a simple curve given by the boundary of the topological disk $(I_\lambda\circ f_\delta\circ\phi)( \delta D)$. If on the other hand, $\gamma_{\lambda,\delta}$ was also null-homotopic in $(I_\lambda\circ f_\delta)(\Sigma\setminus\phi( \delta D))$, then $\Sigma\setminus\phi( \delta D)$ would be  topologically a closed disk and thus, $\Sigma$ would be a topological sphere which is excluded by the hypothesis $p\geq1$.
\end{proof}

Finally, we are able to complete the construction in Example \ref{ex:genus_zero}.
Suppose $\Sigma$ has genus  $p \geq 1$. According to \cite{Simon,Kusner,BauerKuwert}, there exists a smooth immersion $f_0 \colon \Sigma \rightarrow \mathbb{R}^n$ with 
\begin{equation}
\mathcal{W}(f_0) = \min %X.7
\{ \mathcal{W}(f) \mid f \colon \Sigma \rightarrow \mathbb{R}^n \;  \textrm{immersion} \} <8\pi.
\end{equation}
By \cite{LY}, $f_0$ is an embedding. We choose $\phi,u$ for $f_0$ as in \eqref{eq:ap:graph}, fix $\delta> 0$ small enough and look at the modified embedding $f_\delta$ of $f_0$ as in Lemma \ref{lem:replacement}. 
%defined as in Lemma \ref{lem:replacement} satisfies still $\mathcal{W}(f_\delta)< 8\pi$.
Consider for some $\nu \in \{(0,0) \} \times \mathbb{R}^{n-2}$ of unit length the Möbius transformation $I_\lambda$ as in \eqref{eq:moebIlambda}.
Now, $(I_\lambda \circ f_\delta)(\Sigma)$ is an embedded 2-dimensional submanifold of $\mathbb{R}^n$. 
 % As in Lemma \ref{lem:cap} we look at the immersion $I_\lambda \circ f_\delta \circ \phi$.
Notice that for $\lambda >0$ small enough (depending on $\delta$) a neighborhood of the equator $G_\nu \vcentcolon= \partial D \times \{ \nu \} \subset \mathbb{S}_\nu^2$ lies in the image $(I_\lambda \circ f_\delta) ( \phi ( \delta D) ) $. In particular, $G_\nu$ is a geodesic on  $(I_\lambda \circ f_\delta)(\Sigma)$. 

 Recall now that for a 2-dimensional oriented manifold $M$ a smooth time-dependent family of closed immersed curves $\gamma \colon I \times \mathbb{S}^1 \rightarrow M$ is said to be an evolution by \emph{curve shortening flow} if
\begin{equation}
    \partial_t \gamma(t,x) =  \kappa(t,x) N(t,x) \quad \forall t \in I , x \in \mathbb{S}^1,
\end{equation}
where $\kappa(t,\cdot)$ denotes the geodesic curvature of $\gamma(t,\cdot)$ with respect to the oriented unit normal $N(t,\cdot)$. 
 
 \begin{lemma}\label{lem:CSF}
   Let $\delta,\lambda > 0$ be small enough as in Lemma \ref{lem:cap} and
   $\max\{\frac{2\lambda}{\delta}, \frac{2\lambda^2}{\lambda^2 + \delta^2} \} < \frac{1}{2}$.  Then the curve shortening flow on $(I_\lambda\circ f_\delta)(\Sigma)$ with initial datum $\gamma_{\lambda,\delta}$ exists for all times and converges to a closed null-homotopic geodesic $\sigma_{\lambda,\delta}$ of $(I_\lambda \circ f_\delta)(\Sigma)$, whose length is shorter than $2\pi \frac{2\lambda}{\delta}$.
 \end{lemma}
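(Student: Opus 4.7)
The plan is to apply Grayson's theorem for the curve shortening flow (CSF) on the closed Riemannian surface $(I_\lambda\circ f_\delta)(\Sigma)$, with initial datum $\gamma_{\lambda,\delta}$. This yields a unique smooth evolution on a maximal interval $[0,T_*)$, and by the classical dichotomy of Grayson for surfaces either $T_*=\infty$ and the flow converges (after reparametrization) to a closed geodesic, or $T_*<\infty$ and the curve shrinks smoothly to a point. The task therefore reduces to ruling out the latter, which I will do by an avoidance argument anchored on the equator $G_\nu$.

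The equator $G_\nu$ is a closed geodesic of $(I_\lambda\circ f_\delta)(\Sigma)$: by \Cref{lem:cap} a neighborhood of $G_\nu$ in the full surface coincides with a neighborhood in the round sphere $\mathbb{S}^2_\nu$, of which $G_\nu$ is a great circle. Hence $G_\nu$ is stationary under CSF, and since the height assumption $\frac{2\lambda^2}{\lambda^2+\delta^2}<\frac{1}{2}$ places $\gamma_{\lambda,\delta}$ at strictly lower $\nu$-height than $G_\nu$, the two curves are initially disjoint. The avoidance principle then keeps $\gamma_{\lambda,\delta}(t)$ disjoint from $G_\nu$ for all $t\in[0,T_*)$, hence confined to the connected component $U$ of $(I_\lambda\circ f_\delta)(\Sigma)\setminus G_\nu$ that does not contain the north pole $2\nu$. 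Topologically, $U$ is the interior of a compact connected genus-$p$ surface with single boundary circle $G_\nu$, obtained by gluing the lower annular piece of the cap (between $\gamma_{\lambda,\delta}$ and $G_\nu$) to the handle-carrying complement $(I_\lambda\circ f_\delta)(\Sigma\setminus\phi(\delta D))$ along $\gamma_{\lambda,\delta}$. For such a surface with $p\geq 1$ the boundary is not null-homotopic (otherwise $U$ would be a disk and the whole ambient surface a $2$-sphere); since $\gamma_{\lambda,\delta}$ is freely homotopic to $G_\nu$ in $U$ via the annular piece between them, $\gamma_{\lambda,\delta}$ is not null-homotopic in $U$ either.

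Suppose now $T_*<\infty$. Grayson's analysis forces $\gamma_{\lambda,\delta}(t)$ to shrink smoothly to a point as $t\to T_*$, so for $t$ close enough to $T_*$ the curve lies in an arbitrarily small simply connected open subset of $U$ (a neighborhood of the collapse point intersected with $U$) and is therefore null-homotopic in $U$. Since CSF gives a smooth isotopy of simple closed curves inside $U$, the homotopy class in $U$ is preserved along the flow, contradicting the previous paragraph. Therefore $T_*=\infty$ and $\gamma_{\lambda,\delta}(t)$ converges to a closed geodesic $\sigma_{\lambda,\delta}$ of $(I_\lambda\circ f_\delta)(\Sigma)$, which is null-homotopic in the full surface (inherited from $\gamma_{\lambda,\delta}$) and satisfies $\mathcal{L}(\sigma_{\lambda,\delta})\leq\mathcal{L}(\gamma_{\lambda,\delta})=2\pi\frac{2\lambda\delta}{\lambda^2+\delta^2}<2\pi\frac{2\lambda}{\delta}$ by the non-increase of length under CSF. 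The main technical subtlety is appealing to Grayson's dichotomy and the avoidance principle in their appropriate form on a general closed surface; the topological step and the length bound are essentially immediate from these.
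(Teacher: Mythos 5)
Your proof is correct and takes essentially the same route as the paper: avoidance against the stationary geodesic $G_\nu$ (valid since near the equator the surface is the round sphere) confines the flow to the lower, genus-carrying region, where the initial curve is not null-homotopic, which rules out finite-time collapse, and Grayson's theorem plus length monotonicity under the flow then yield the convergence to a null-homotopic geodesic shorter than $2\pi\frac{2\lambda}{\delta}$. The additional details you spell out (the explicit topological argument for non-null-homotopy in the component $U$, and the explicit contradiction at a hypothetical collapse time) are just expansions of steps the paper delegates to Lemma \ref{lem:cap} and to Grayson's dichotomy.
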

The proof of Lemma \ref{lem:CSF} relies on the \emph{avoidance principle} of the curve shortening flow in 2-dimensional Riemannian manifolds.  Its proof follows the lines of the proof of embeddedness-preservation in \cite[Section 3]{Gage}. We present the argument in Appendix \ref{app:aviodanceprinciple} for the convenience of the reader.
\begin{proof}[Proof of Lemma \ref{lem:CSF}.] 
   Let $\gamma_1 \colon [0,t_{max}) \times \mathbb{S}^1 \to (I_\lambda \circ f_\delta)(\Sigma) $ be a maximal evolution by curve shortening flow with initial datum $\gamma_1(0,\cdot)= \gamma_{\lambda,\delta}$.
 Define the two disjoint sets $E_+ \vcentcolon= \{ (z, \zeta) \in \mathbb{R}^2 \times \mathbb{R}^{n-2} \mid |\zeta| \geq 1 \}$ and  $E_- \vcentcolon= \{ (z, \zeta) \in \mathbb{R}^2 \times \mathbb{R}^{n-2} \mid |\zeta| < 1 \}$.
   Observe that by Lemma \ref{lem:cap} and our choice of parameters we have $\gamma_{\lambda,\delta}(\mathbb{S}^1) \subset E_-$  and $\partial (E_+ \cap (I_\lambda\circ f_\delta)(\Sigma)) =\partial (E_+ \cap \mathbb{S}_\nu^2) = G_\nu$. In particular, $\gamma_{\lambda,\delta}(\mathbb{S}^1)$ and $G_\nu$  are disjoint. 
    Applying the avoidance principle (Lemma \ref{lem:csf}) to $\gamma_1$ and the constant evolution $\gamma_2(t, \cdot) \equiv G_\nu$ (which adheres to the evolution law since $G_\nu$ is a geodesic), we infer that $\gamma_1(t,\cdot)$ can not intersect $G_\nu = \partial (E_+ \cap (I_\lambda \circ f_\delta)(\Sigma))$ in finite time. Hence the flow must stay in $\overline{E_-} \cap (I_\lambda \circ f_\delta)(\Sigma)$.
    From Lemma \ref{lem:cap} one can infer that  $\gamma_{\lambda,\delta}$ is not null-homotopic in $(I_\lambda \circ f_\delta )(\Sigma) \cap \overline{E_-}$. Thus we conclude from Grayson's theorem \cite[Theorem 0.1]{Grayson} that $t_{max}= \infty$ and $\gamma_1$ converges smoothly to a closed geodesic $\sigma_{\lambda,\delta}$, which is homotopic to $\gamma_1(0,\cdot)=\gamma_{\lambda,\delta}$, hence null-homotopic in $(I_\lambda \circ f_\delta)(\Sigma)$ by Lemma \ref{lem:cap}).
    Since the curve shortening flow decreases the length of curves, the asserted length bound follows immediately from Lemma \ref{lem:cap}. 
\end{proof}

Once this lemma is shown one can readily construct the immersions $f_\varepsilon$ in Example \ref{ex:genus_zero}.
\begin{proof}[Proof of the claim in Example \ref{ex:genus_zero}]
Let $\varepsilon> 0$ be fixed and $f_0$ be as above. By Lemma \ref{lem:replacement} one can choose $\delta = \delta(\varepsilon) > 0$ small enough such that the modified immersion $f_\delta$ constructed above is an embedding and satisfies $\mathcal{W}(f_\delta) \leq \mathcal{W}(f_0) + \varepsilon$. 
    By Lemma \ref{lem:CSF} one concludes that for each $\lambda > 0$ small enough there exists a closed null-homotopic geodesic $\sigma_{\lambda,\delta}$ of length less than $2\pi \frac{2\lambda}{\delta}$. Choosing $\lambda < \frac{\varepsilon\delta}{4 \pi}$ the claim follows.
\end{proof}

\appendix

\section{An example of a graph with unbounded  curvature} \label{sec:Toro-curvature}
In this section we analyze the Gauss curvature of the graph given in Example 1 of \cite{Toro}. 
\begin{lemma}\label{lem:Toro}
    The Gauss curvature of the graph of the function 
    \begin{equation}
        u\colon \R^2\to \R,\quad u(x,y)\vcentcolon = x\log |\log \sqrt{x^2+y^2}|
    \end{equation}
    satisfies
    \begin{equation}
        \lim_{x\to0} K(x,0) = \infty,\qquad \lim_{y\to0}K(0,y) = -\infty.
    \end{equation}
\end{lemma}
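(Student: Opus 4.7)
The plan is to apply the classical formula for the Gauss curvature of a graph $F(x,y) = (x,y,u(x,y))$ in $\mathbb{R}^3$,
\begin{equation}
K(x,y) = \frac{u_{xx}u_{yy} - u_{xy}^2}{(1 + u_x^2 + u_y^2)^2},
\end{equation}
and to track asymptotics near the origin by direct differentiation. Writing $r = \sqrt{x^2+y^2}$ and $L(r) \vcentcolon= \log|\log r| = \log(-\log r)$ for $0<r<1$, so that $u = x\,L(r)$, the essential one-variable derivatives are $L'(r) = \frac{1}{r\log r}$ and $L''(r) = -\frac{\log r + 1}{(r\log r)^2}$. Note $rL'(r) = \frac{1}{\log r} \to 0$ while $L'(r)$ itself blows up like $\frac{1}{r|\log r|}$; this competition is the crux.

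For the limit along the $x$-axis, I would exploit the symmetry $u(x,-y) = u(x,y)$, which gives $u_y(x,0) \equiv 0$ and hence $u_{xy}(x,0) = 0$, so that the cross term drops out of the numerator. Differentiating $u_y = \frac{xy}{r}L'(r)$ once more in $y$ and setting $y=0$ yields $u_{yy}(x,0) = \operatorname{sgn}(x)\,L'(|x|) = \frac{1}{x\log|x|}$, while differentiating $u_x = L(r) + \frac{x^2}{r}L'(r)$ in $x$ and restricting to $y=0$ gives
\begin{equation}
u_{xx}(x,0) = 2\operatorname{sgn}(x)\,L'(|x|) + x\,L''(|x|) = \frac{1}{x\log|x|} - \frac{1}{x(\log|x|)^2}.
\end{equation}
The numerator therefore satisfies $u_{xx}u_{yy} \sim \frac{1}{x^2(\log|x|)^2}$ as $x\to 0$, and since $u_x(x,0) = L(|x|) + \frac{1}{\log|x|}$, the denominator is controlled by $(\log|\log|x||)^4$. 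Because any power of $|\log|x||$ crushes any fixed power of $\log|\log|x||$, one concludes $K(x,0) \to +\infty$.

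For the limit along the $y$-axis, note that every term in the general expressions for $u_{xx}$ and $u_{yy}$ carries an explicit factor of $x$; indeed one computes $u_{yy} = x\bigl[\tfrac{x^2}{r^3}L'(r) + \tfrac{y^2}{r^2}L''(r)\bigr]$ and an analogous identity for $u_{xx}$. Hence $u_{xx}(0,y) = u_{yy}(0,y) = 0$ and the curvature reduces to $K(0,y) = -\frac{u_{xy}(0,y)^2}{(1+u_x(0,y)^2)^2}$. A one-line calculation from $u_y = \frac{xy}{r}L'(r)$ shows $u_{xy}(0,y) = \frac{1}{y\log|y|}$, and $u_x(0,y) = L(|y|)$. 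The same $\log$ versus $\log\log$ scale comparison as above forces $K(0,y)\to -\infty$.

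The only real care needed is sign bookkeeping (one must remember $\log r < 0$ for $0<r<1$ so that $L'(r)<0$ and $L''(r)>0$ near the origin) and the scale comparison showing that the $1/(r\log r)$-blowup of the Hessian eigenvalues outpaces the $(\log|\log r|)^4$ growth of $(1+|\nabla u|^2)^2$. No delicate estimate beyond this is required; the rest is a routine chain-rule computation.
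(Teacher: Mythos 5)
Your proposal is correct and follows essentially the same route as the paper: a direct computation of $\mathrm{D}u$ and $\mathrm{D}^2u$, restriction to the two coordinate axes, and the graph formula $K=\det \mathrm{D}^2u/(1+|\mathrm{D}u|^2)^2$, with the same asymptotics $\det\mathrm{D}^2u\sim \pm\,r^{-2}(\log r)^{-2}$ against a denominator of order $(\log|\log r|)^4$. The symmetry observations you use to kill the cross term (respectively the diagonal terms) are just a slight repackaging of the explicit second-derivative formulas computed in the paper.
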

\begin{proof}
    Let $r\colon\R^2\to\R$ be defined by $r(x,y) \vcentcolon = \sqrt{x^2+y^2}$. On the open  unit disk $D = \{ r< 1 \}$
    direct computation gives
    \begin{equation}
        \partial_xu = \log|\log r| + \frac{x^2}{r^2\log r}, \qquad \partial_yu= \frac{xy}{r^2\log r}
    \end{equation}
    as well as 
    \begin{align}
        \partial^2_{xx}u &= \frac{x}{r^2\log r}\Bigl(3-\frac{2x^2}{r^2}-\frac{x^2}{r^2\log r}\Bigr),\\
         \partial^2_{xy}u &= \frac{y}{r^2\log r}\Bigl(1-\frac{2x^2}{r^2}-\frac{x^2}{r^2\log r}\Bigr),  \\
        \partial^2_{yy}u &= \frac{x}{r^2\log r}\Bigl(1-\frac{2y^2}{r^2}-\frac{y^2}{r^2\log r}\Bigr).
    \end{align}
    It follows that as $r \rightarrow 0+$ we have
    \begin{align}
        &|\mathrm Du(x,0)|^2=(\log|\log r|)^2 + o(1), && \det \mathrm D^2u(x,0) =\frac{1-\frac{1}{\log r}}{x^2(\log r)^2}\\ %\frac{1}{x^2(\log r)^2}\Bigl(1-\frac{1}{\log r}\Bigr) \\
        &|\mathrm Du(0,y)|^2=(\log|\log r|)^2, &&\det \mathrm D^2u(0,y) = \frac{-1}{y^2(\log r)^2}.
    \end{align} %as $r\to 0+$.
    Using %the fact
    that the Gauss curvature is given by $K = (\det \mathrm D^2u)/(1 + |\mathrm Du|^2)^2$, we infer
    \begin{equation}
        \lim_{x\to0}K(x,0) = -\lim_{y\to0}K(0,y) = \lim_{y\to0}\frac{1}{y^2(\log r)^2(\log|\log r|)^4} = \infty
    \end{equation}
    which completes the proof.
\end{proof}
\section{The avoidance principle on a surface}\label{app:aviodanceprinciple}
\begin{lemma}[Avoidance principle]\label{lem:csf}
    Let $M$ be a smooth compact two-dimensional oriented Riemannian manifold without boundary.  Consider for $T \in (0,\infty)$ two evolutions $\gamma_1,\gamma_2 \colon [0,T] \times \mathbb{S}^1 \rightarrow M$ of closed curves by the curve shortening flow such that $\gamma_1(0,\mathbb{S}^1) \cap \gamma_2(0, \mathbb{S}^1) = \emptyset$. Then $\gamma_1(t, \mathbb{S}^1) \cap \gamma_2(t, \mathbb{S}^1) = \emptyset$ for all $t \in [0,T]$.
\end{lemma}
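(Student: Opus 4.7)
The plan is to argue by contradiction via a first-time-of-contact argument combined with the parabolic strong maximum principle applied to a graph representation of the two curves near the first contact point. Suppose the conclusion fails and set
\begin{equation}
t_0 \vcentcolon= \inf\{t \in [0,T] \mid \gamma_1(t, \mathbb{S}^1) \cap \gamma_2(t, \mathbb{S}^1) \neq \emptyset\} \leq T.
\end{equation}
By continuity and compactness of $\mathbb{S}^1$, there exist $x_0, y_0 \in \mathbb{S}^1$ with $p \vcentcolon= \gamma_1(t_0, x_0) = \gamma_2(t_0, y_0)$. At this point the curves must be tangent: a transverse intersection at $(t_0, p)$ would, via the implicit function theorem applied to $(t, x, y) \mapsto \gamma_1(t,x) - \gamma_2(t,y)$ in any coordinate chart around $p$, produce intersection points for $t < t_0$, contradicting the definition of $t_0$. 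Hence $\dot\gamma_1(t_0, x_0)$ and $\dot\gamma_2(t_0, y_0)$ are parallel.

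Second, I would introduce geodesic normal coordinates $(u,v)$ on $M$ centered at $p$, with $\partial_u|_p$ pointing along the common tangent direction, so that $g_{ij}(0) = \delta_{ij}$ and the Christoffel symbols vanish at the origin. On a sufficiently small parabolic cylinder $Q \vcentcolon= (-\rho, \rho) \times (t_0 - \tau, t_0]$ both curves can be reparametrized spatially as graphs $v = \varphi_i(u,t)$ over the $u$-axis. In this graph gauge, the curve shortening flow takes the quasilinear form
\begin{equation}
\partial_t \varphi_i = a(u, \varphi_i, \partial_u \varphi_i)\, \partial_{uu} \varphi_i + b(u, \varphi_i, \partial_u \varphi_i),
\end{equation}
with $a$ smooth and positive and $b$ smooth, both determined by $g$ (this is essentially the graphical mean curvature operator in a Riemannian metric, cf.\ \cite{Gage}).

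Third, by the definition of $t_0$ together with the tangency at $p$, one may assume, after swapping the labels of $\gamma_1, \gamma_2$ if necessary, that $w \vcentcolon= \varphi_1 - \varphi_2$ satisfies $w \geq 0$ on $Q$ while $w(0, t_0) = 0$ and $\partial_u w(0, t_0) = 0$. Subtracting the two equations and writing the result as
\begin{equation}
\partial_t w = A(u,t)\, \partial_{uu} w + B(u,t)\, \partial_u w + C(u,t)\, w
\end{equation}
via the fundamental theorem of calculus applied to $a, b$ along the segment joining $\varphi_2$ to $\varphi_1$ yields a linear, uniformly parabolic equation with bounded coefficients. The parabolic strong maximum principle (together with the Hopf lemma at the terminal time $t_0$) then forces $w \equiv 0$ on $Q$, so the images of $\gamma_1, \gamma_2$ coincide on a nonempty open set for all $t < t_0$ close to $t_0$, contradicting $\gamma_1(t, \mathbb{S}^1) \cap \gamma_2(t, \mathbb{S}^1) = \emptyset$ on $[0, t_0)$.

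The main obstacle is setting up the simultaneous graph representation on a single parabolic cylinder: one must verify that the tangency at $(t_0, p)$ is preserved in a sufficiently uniform sense near $t_0$ that both $\gamma_1, \gamma_2$ can be straightened into graphs over the same axis, and that the resulting $w$ maintains a fixed sign. These steps are standard and follow the strategy of \cite[Section~3]{Gage}, but the orientation and parametrization bookkeeping is the delicate part of the argument.
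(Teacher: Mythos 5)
Your proposal is correct in outline, but it follows a genuinely different route from the paper. You argue locally: first contact time $t_0$, tangency at the contact point via the implicit function theorem, a simultaneous graph gauge $v=\varphi_i(u,t)$ in normal coordinates, and then the linear parabolic equation for $w=\varphi_1-\varphi_2$ together with the strong maximum principle (the Hopf lemma is not actually needed, since the zero of $w$ at $(0,t_0)$ is interior in space and the strong maximum principle already propagates backward in time; also note that the sign of $w$ on the punctured cylinder follows simply from connectedness plus disjointness of the curves for $t<t_0$). The paper instead works globally, in the spirit of Gage: after a Nash embedding it studies the two-point function $d(t,x_1,x_2)=|\gamma_1(t,x_1)-\gamma_2(t,x_2)|^2$ on $\mathbb S^1\times\mathbb S^1$, shows $\partial_t d-\Delta_{\rho(t)}d=-4$, and runs a minimum-principle argument for the auxiliary function $h=e^{\beta t}d+\theta t$, using only first/second derivative tests at a spacetime minimum plus a geometric estimate on the embedded surface to control the mixed derivative $\partial^2_{s_1s_2}d$. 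The trade-off: the paper's argument avoids all graph-parametrization and gauge bookkeeping (the delicate part you flag yourself, including verifying the quasilinear graphical form of the flow in a Riemannian chart and the uniform graph representation on a fixed parabolic cylinder) and yields a quantitative separation estimate $d(t)\geq e^{-\beta T}\inf d(0)$, at the cost of needing the Nash embedding, the auxiliary constant $C(M)$, and an extension of the flow past $T$ to handle terminal-time minima; your local approach is the classical PDE one, needs only qualitative disjointness, handles the terminal time $t_0=T$ without extending the flow, but relies on the strong parabolic maximum principle and on carefully justified local reductions that your sketch leaves as "standard".
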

\begin{proof}
By Nash's embedding theorem we may assume that $M \subset \mathbb{R}^n$ is an embedded submanifold. For $p \in M$ we denote by $\pi_{T_pM} : \mathbb{R}^n \rightarrow T_pM$ the orthogonal projection on $T_pM$. Recall that $|\cdot|$ denotes the Euclidean norm on $\mathbb{R}^n$. Suppose that $\eta= \eta(M) >0$ is chosen such that for all $p \in M$ the restriction  $\pi_{T_pM} \vert_{B_\eta(0) \cap (M-p)} \colon B_\eta(0) \cap (M - p) \rightarrow T_pM $ is injective and satisfies $|\pi_{T_p M} (v- p)| \geq \frac{1}{2} |v-p|$  for all $v\in B_\eta(p) \cap M$. For a vector $w \in T_p M$ the quantity $Jw = J_p w \in T_p M $ denotes the unique vector that is orthogonal to $w$, has the same norm as $w$, and satisfies that $(w,Jw)$ is an oriented basis of $T_p M$. A straightforward computation shows that there exists a  constant $C= C(M) >0$ such that for all $p,q \in M$ with $0< |p-q|< \eta$ one has
\begin{equation}
    \left| \frac{J_p \pi_{T_p M }(p-q)}{|\pi_{T_p M}(p-q)|} -  \frac{J_q \pi_{T_q M }(p-q)}{|\pi_{T_q M}(p-q)|} \right|  \leq C |p-q|.\label{eq:C(M)}
\end{equation}
   For $\mathbb{S}^1 = \mathbb{R} / \mathbb{Z}$ we define the time dependent Riemannian metric $\rho(t)$ on $\mathbb{S}^1 \times \mathbb{S}^1$ via $\rho(t)_{(x_1,x_2)}= |\partial_{x_1}\gamma_1(t,x_1)|^2 dx_1^2 + |\partial_{x_2}\gamma_2(t,x_2)|^2 dx_2^2$.
    We define now a distance function $d \colon [0,T] \times \mathbb{S}^1 \times \mathbb{S}^1 \rightarrow \mathbb{R}$ via 
 $
        d(t,x_1,x_2) = |\gamma_1(t,x_1)- \gamma_2(t,x_2)|^2.
 $ 
    We intend to show that $d> 0$ on $[0,T] \times \mathbb{S}^1 \times \mathbb{S}^1$. 
    \smallskip
    
    \textbf{Claim 1.}  $ \partial_t d - \Delta_{\rho(t)} d = - 4$. 
    
    \noindent\textit{Proof of Claim 1.} %Observe that by the formula 
    Since $ \Delta_{\rho}  = \frac{1}{\sqrt{\mathrm{det}(\rho)}} \partial_{x_i} \left( \sqrt{\mathrm{det}(\rho)} \rho^{ij} \partial_{x_j} \right) $ one has
\begin{align}
    \Delta_{\rho(t)}  & = \frac{1}{|\partial_{x_1} \gamma_1(t,x_1)| | \partial_{x_2} \gamma_2(t,x_2)|} \Bigg( \partial_{x_1}\left( \frac{|\partial_{x_2}\gamma_2(t,x_2)|}{|\partial_{x_1} \gamma_1(t,x_1)|} \partial_{x_1}  \right) \\
    &\qquad\qquad\qquad\qquad\qquad\qquad\qquad+ \partial_{x_2}\left( \frac{|\partial_{x_1} \gamma_1(t,x_1)|}{|\partial_{x_2}\gamma_2(t,x_2)|} \partial_{x_2}  \right) \Bigg)  \\ & = \partial_{s_1s_1}^2  + \partial_{s_2s_2}^2,
\end{align}
    where $s_i$ is the (time dependent) arc length element of $\gamma_i(t,\cdot)$ with respect to $x_i$. 
    We compute
    \begin{align}
        \partial_{s_1} d &= 2 \langle \gamma_1(t,x_1) - \gamma_2(t,x_2) , \partial_{s_1} \gamma_1(t,x_1) \rangle,  \\\  \partial_{s_2} d &=  - 2 ( \gamma_1(t,x_1) - \gamma_2(t,x_2) , \partial_{s_2} \gamma_2(t,x_2) \rangle. 
    \end{align}
    Using that by the Frenet--Serret formulae on $M$ one has $\partial_{s_is_i}^2 \gamma_i(t,x_i) = \kappa_i(t,x_i) N_i(t,x_i)$ we find 
    \begin{align}
        \partial^2_{s_1s_1} d  & = 2 |\partial_{s_1} \gamma_1(t,x_1)|^2 + 2\langle \gamma_1(t,x_1) - \gamma_2(t,x_2) , \partial^2_{s_1s_1} \gamma_1(t,x_1) \rangle  \\ & = 2 + 2\langle \gamma_1(t,x_1) - \gamma_2(t,x_2) ,  \kappa_1(t,x_1)N_1(t,x_1) \rangle
    \end{align}
    and similarly
$
        \partial^2_{s_2s_2} d  
        %& = 2 |\partial_{s_2} \gamma_2(t,x_2)|^2 - 2( \gamma_1(t,x_1) - \gamma_2(t,x_2) , \partial^2_{s_2} \gamma_2(t,x_2) )  =
        = 2-  2 \langle \gamma_1(t,x_1) - \gamma_2(t,x_2) , \kappa_2(t,x_2) N_2(t,x_2) \rangle
    $
    so that 
    \begin{align}
        &\Delta_{\rho(t)}d \\
        &\quad = 4 + 2  \langle \gamma_1(t,x_1) - \gamma_2(t,x_2) , \kappa_1(t,x_1) N_1(t,x_1) - \kappa_2(t,x_2) N_2(t,x_2) \rangle  
        \\ &\quad  = 4 + \langle \gamma_1(t,x_1) - \gamma_2(t,x_2) , \partial_t \gamma_1(t,x_1) - \partial_t \gamma_2(t,x_2) \rangle = 4 + \partial_t d.
    \end{align}
    Claim 1 follows. For later use we also compute
    \begin{equation}\label{eq:secderdist}
        \partial^2_{s_1s_2} d = - 2 \langle \partial_{s_2}\gamma_2(t,x_2), \partial_{s_1} \gamma_1(t,x_1) \rangle .
    \end{equation}

For $\theta$, $\beta >0 $ we consider now the auxiliary function $h \colon [0,T] \times \mathbb{S}^1 \times \mathbb{S}^1 \rightarrow \mathbb{R}$ defined by $h(t,x_1,x_2) \vcentcolon= e^{\beta t} d(t,x_1,x_2) + \theta t$. 
 We may assume without loss of generality that for  $\theta > 0$ small enough one has $\inf_{t \in [0,T], x_1,x_2 \in \mathbb{S}^1} h < \eta$, since otherwise the lemma follows immediately from the observation that  $\theta \rightarrow 0+$ would yield $e^{\beta t} d(t,x_1,x_2) \geq \eta$ for all $t \in [0,T], x_1,x_2 \in \mathbb{S}^1$.
\smallskip

\textbf{Claim 2.} For $\beta > 2C^2$ with $C= C(M)$ as in \eqref{eq:C(M)} and for $\theta> 0$ small enough one has for all $t \in [0,T]$ and $x_1,x_2 \in \mathbb{S}^1$ that 
 $h(t,x_1,x_2) \geq \inf_{y_1,y_2 \in \mathbb{S}^1} h(0,y_1,y_2)$ . \\ 
 \textit{Proof of Claim 2.} Assume the opposite. Then there exist $\hat{t} \in (0,T]$ and $\hat{x}_1,\hat{x}_2 \in \mathbb{S}^1$ such that $h(\hat{t},\hat{x}_1,\hat{x}_2) = \min_{t \in [0,T], x_1,x_2 \in \mathbb{S}^1} h(t,x_1,x_2) (< \eta)$. We infer that at $(\hat{t},\hat{x}_1, \hat{x}_2)$ there holds   $d(\hat{t},\hat{x}_1,\hat{x}_2) < h(\hat{t},\hat{x}_1,\hat{x}_2) < \eta$ as well as $\partial_t h \leq 0$, $\partial_{s_1} h = \partial_{s_2} h = 0$, $\partial^2_{s_1s_1} h \geq 0 $, $\partial^2_{s_2s_2} h \geq 0 $ and $(\partial^2_{s_1s_1} h) (\partial^2_{s_2s_2} h) - (\partial^2_{s_1s_2} h)^2 \geq 0$ (as the latter expression is the determinant of the Hessian in $(\mathbb{S}^1 \times \mathbb{S}^1 , \rho(\hat{t}))$). 
Notice first that this implies that $d(\hat{t},\hat{x}_1,\hat{x}_2)> 0$, i.e.\ $\gamma_1(\hat{t},\hat{x}_1) \neq \gamma_2(\hat{t},\hat{x}_2)$. Indeed,  $d(\hat{t},\hat{x}_1,\hat{x}_2)= 0$ would imply $\partial_t d (\hat{t},\hat{x}_1,\hat{x}_2)= 0$ and with this one would infer  the contradiction $\partial_t h(\hat{t},\hat{x}_1,\hat{x}_2) = \theta > 0$. (We remark that in order to obtain $\partial_t d (\hat{t},\hat{x}_1,\hat{x}_2)= 0$ in the case of $\hat{t} = T$ one needs to extend the flow to $[0,T+ \varepsilon)$).
By \eqref{eq:secderdist} one has
\begin{align}
     \partial^2_{s_1s_2} h(\hat{t},\hat{x}_1,\hat{x}_2)  &= e^{\beta \hat{t}} \partial^2_{s_1s_2} d(\hat{t},\hat{x}_1,\hat{x}_2) \\
     &= - 2e^{\beta \hat{t}} \langle \partial_{s_2}\gamma_2(\hat{t},\hat{x}_2), \partial_{s_1} \gamma_1(\hat{t},\hat{x}_1)\rangle. \label{eq:secarcl}
 \end{align}
 
 Since also $\partial_{s_1} h(\hat{t},\hat{x}_1,\hat{x}_2) = \partial_{s_2} h(\hat{t},\hat{x}_1,\hat{x}_2) = 0$ we have 
 \begin{align}
     &\langle \partial_{s_1} \gamma_1(\hat{t}, \hat{x}_1) , \gamma_1(\hat{t},\hat{x}_1) - \gamma_2(\hat{t}, \hat{x}_2) \rangle \\
     &\qquad = \langle\partial_{s_2} \gamma_2(\hat{t}, \hat{x}_2) , \gamma_1(\hat{t},\hat{x}_1) - \gamma_2(\hat{t}, \hat{x}_2) \rangle = 0.
 \end{align}
Since $T_p M$ is 2-dimensional and $\gamma_1(\hat{t},\hat{x}_1) \neq \gamma_2(\hat{t},\hat{x}_2)$) for $i = 1,2$ this implies
\begin{equation}
    \partial_{s_i} \gamma_i(\hat{t},\hat{x}_i) =  \pm   J \frac{\pi_{T_{\gamma_i(\hat{t},\hat{x}_i)} M} (\gamma_1(\hat{t},\hat{x}_1) - \gamma_2(\hat{t}, \hat{x}_2))}{|\pi_{T_{\gamma_i(\hat{t},\hat{x}_i)} M} (\gamma_1(\hat{t},\hat{x}_1) - \gamma_2(\hat{t}, \hat{x}_2))|}. %[\pi_{T_{\gamma_i(\hat{t},\hat{x}_i)} M} (\gamma_1(\hat{t},\hat{x}_1) - \gamma_2(\hat{t}, \hat{x}_2))].
\end{equation}
Thereupon, using \eqref{eq:C(M)} we find 
\begin{equation}
    |\partial_{s_1} \gamma_1(\hat{t},\hat{x}_1) \pm \partial_{s_2}\gamma_2(\hat{t},\hat{x}_2)| \leq C \sqrt{d}.
\end{equation}
%where from now on $d$ is always evaluated at $(\hat{t},\hat{x}_1,\hat{x}_2)$ unless stated otherwise. 
The polarization identity yields that 
$$
    |\langle \partial_{s_2}\gamma_2(\hat{t},\hat{x}_2), \partial_{s_1} \gamma_1(\hat{t},\hat{x}_1) \rangle | =  \frac{1}{2} \big| |\gamma_1(\hat{t},\hat{x}_1) \pm \gamma_2(\hat{t},x_2) |^2 - 2 \big|\geq 1- \tfrac{1}{2} C^2 d$$ 
and one infers from \eqref{eq:secarcl} that 
$
    |\partial^2_{s_1s_2} h | \geq (2 -  C^2 d) e^{\beta \hat{t}}.
$
Altogether one obtains (always evaluated at $(\hat{t},\hat{x}_1,\hat{x}_2)$)
%Now notice that (again always evaluated at $(\hat{t},\hat{x}_1,\hat{x}_2)$) one has
\begin{align}\label{eq:ineq1}
    0 & \geq \partial_t h = \beta e^{\beta \hat{t}} d + e^{\beta \hat{t}} (\partial_t d) + \theta  
     = \beta e^{\beta \hat{t}} d + e^{\beta \hat{t}} (\Delta_{\rho(\hat{t})} d -4 ) + \theta.
\end{align}
Using that 
\begin{align}
     e^{\beta \hat{t}} (\Delta_{\rho(\hat{t})} d ) &= \partial^2_{s_1s_1}h + \partial^2_{s_2s_2} h \\
     &\geq 2 \sqrt{\partial_{s_1s_1}^2 h \partial^2_{s_2s_2} h} \geq   2 |\partial_{s_1s_2}^2 h| \geq (4 - 2C^2d)e^{\beta \hat{t}}
\end{align}
we find with \eqref{eq:ineq1} that
$
    0 \geq (\beta - 2C^2) e^{\beta \hat{t}} d + \theta \geq \theta > 0.  
$
A contradiction. Claim 2 follows. 

Looking at the limit case $\theta \rightarrow 0+$ in the statement of Claim 2 we infer that for a fixed $\beta > 2C^2$ there holds
\begin{equation}
    d(t,x_1,x_2) \geq  e^{-\beta T} \inf_{x_1,x_2 \in \mathbb{S}^1} d(0,x_1,x_2). 
\end{equation}
This finally proves the statement. 
\end{proof}

\section*{Acknowledgements}

This research was funded in whole, or in part, by the Austrian Science Fund (FWF), grant numbers \href{https://doi.org/10.55776/P32788}{10.55776/P32788} and \href{https://doi.org/10.55776/ESP557}{10.55776/ESP557}. Part of this work was conducted during the \emph{Surfaces22 summer school} in Frauenchiemsee, Germany, in August 2022. The authors are grateful to the organizers for the opportunity to attend and to the Joachim Herz Foundation (Project 800048) for covering the expenses.
Moreover, the authors would like to thank the referees for their valuable comments on the original manuscript.

\bibliography{Lib}
\bibliographystyle{abbrev}
\end{document}